\tikzstyle{shaded}=[fill=red!10!blue!20!gray!30!white]
\tikzstyle{shaded line}=[double=red!10!blue!20!gray!30!white, double distance=1.5mm, draw=black]
\tikzstyle{unshaded}=[fill=white]
\tikzstyle{unshaded line}=[double=white, double distance=1.5mm, draw=black]
\tikzstyle{Tbox}=[circle, draw, thick, fill=white, opaque,]
\tikzstyle{empty box}=[circle, draw, thick, fill=white, opaque, inner sep=2mm]
\tikzstyle{background rectangle}= [fill=red!10!blue!20!gray!40!white,rounded corners=2mm] 
\tikzstyle{on}=[very thick, red!50!blue!50!black]
\tikzstyle{off}=[gray]
\tikzstyle{traces}=[scale=.2, inner sep=1mm]
\tikzstyle{quadratic}=[scale=.4, inner sep=1mm, baseline]
\tikzstyle{annular}=[scale=.7, inner sep=1mm, baseline]
\tikzstyle{make triple edge size}= [scale=.4, inner sep=1mm,baseline] 
\tikzstyle{icosahedron network}=[scale=.3, inner sep=1mm, baseline]
\tikzstyle{ATLsix}=[scale=.25, baseline]
\tikzstyle{TL12}=[scale=.15,baseline]
\tikzstyle{PAdefn}=[scale=.7,baseline]
\tikzstyle{TLEG}=[scale=.5,baseline]
\newtheorem{lemma}{Lemma}[section]
\newtheorem{definition}[lemma]{Definition}
\newtheorem{theorem}[lemma]{Theorem}
\newtheorem{proposition}[lemma]{Proposition}
\newtheorem{remark}[lemma]{Remark}
\newtheorem{corollary}[lemma]{Corollary}
\newtheorem{conjecture}[lemma]{Conjecture}
\newtheorem{example}[lemma]{Example}
\newtheorem{application}[lemma]{Application}
\newenvironment{claim}[1]{\par\noindent\underline{Claim:}\space#1}{}
\newenvironment{claimproof}[1]{\par\noindent\underline{Proof:}\space#1}{\hfill $\blacksquare$}
 \title[Dual Ore's theorem for distributive interval, small index]{Dual Ore's theorem for distributive intervals of small index}
  \author[Sebastien Palcoux]{Sebastien Palcoux}
\address{Institute of Mathematical Sciences, Chennai, India}
\email{palcoux@imsc.res.in}
\subjclass[2010]{20D60, 05E15, 20C15, 06C15}
\keywords{group; representation; lattice; distributive; boolean}
\begin{document}

\begin{abstract}
This paper proves a dual version of a theorem of Oystein Ore for every distributive interval of finite groups $[H,G]$ of index $|G:H|<9720$, and for every boolean interval of rank $<7$. It has applications to representation theory for every finite group.
\end{abstract}

\maketitle

\section{Introduction} 
Oystein Ore has proved that a finite group is cyclic if and only if its subgroup lattice is distributive  \cite{or}. He has extended one side as follows:   

\begin{theorem}[\cite{or}] \label{oreintro}
 Let $[H,G]$ be a distributive interval of finite groups. Then $\exists g  \in G$ such that $\langle Hg \rangle = G$.
\end{theorem}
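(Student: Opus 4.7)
The plan is to argue by contradiction. Assume no $g \in G$ satisfies $\langle Hg\rangle = G$. Since $\langle Hg\rangle$ (the subgroup generated by the coset $Hg$) coincides with $\langle H, g\rangle$, the assumption means every $g \in G$ lies in some proper overgroup of $H$, hence in some coatom of $[H, G]$. Let $M_1, \ldots, M_n$ denote these coatoms (finite in number since $[H,G]$ is a finite lattice). Then $G = \bigcup_{i=1}^n M_i$. The cases $n = 0$ (forcing $H = G$) and $n = 1$ (giving $G = M_1 < G$, impossible) are immediate, so we may assume $n \geq 2$.

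The first substantive step is a reduction to a Boolean interval. Set $H' := \bigcap_{i=1}^n M_i$. The interval $[H', G]$ is distributive as a subinterval of $[H, G]$, and it has exactly the same coatoms $M_1, \ldots, M_n$; by construction, $H'$ is their meet. By Birkhoff's representation theorem, $[H', G]$ is isomorphic to the lattice $J(P)$ of order ideals of its poset $P$ of join-irreducibles. Coatoms correspond to $P$ with one maximal element removed, so the meet of all coatoms corresponds to $P$ with every maximal element removed. Since this meet equals the bottom $H'$ (the empty ideal), $P$ must consist entirely of maximal elements, i.e.\ $P$ is an antichain, and $[H', G] \cong 2^{n}$ is Boolean of rank $n$. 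In particular $[H',G]$ contains $n$ atoms $A_1, \ldots, A_n$ satisfying the Boolean complementarity relations $A_i \cap M_i = H'$ and $A_i \leq M_j$ whenever $i \neq j$.

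The core step is a short product-in-subgroup computation. For each $i$, pick $x_i \in A_i \setminus H'$ (nonempty since the atom $A_i$ properly contains $H'$), and set $y := x_1 x_2 \cdots x_n \in G$. For any fixed $j$, write $y = a\, x_j\, b$ with $a := x_1 \cdots x_{j-1}$ and $b := x_{j+1} \cdots x_n$. Each factor of $a$ and $b$ lies in some $A_i$ with $i \neq j$, hence in $M_j$; so $a, b \in M_j$. If $y$ were in $M_j$, then $x_j = a^{-1} y\, b^{-1}$ would also be in $M_j$, contradicting $x_j \in A_j \setminus H' \subseteq A_j \setminus M_j$ (the latter inclusion from $A_j \cap M_j = H'$). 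Hence $y \notin M_j$ for every $j$, so $y \notin \bigcup_{j=1}^n M_j = G$, a contradiction.

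The main obstacle, in my view, is the Birkhoff-based reduction to the Boolean case: one must identify the correct replacement of $H$ (the meet of all maximal intermediate subgroups), check that this preserves distributivity and the covering hypothesis, and then extract the antichain structure of the join-irreducible poset from the equation $H' = \bigwedge_i M_i$. Once the reduction is in hand, the product-of-atom-representatives construction is an elementary subgroup-closure argument using only the subgroup axioms on each $M_j$ together with the Boolean complementarity between $A_j$ and $M_j$.
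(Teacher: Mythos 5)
Your argument is correct. Note that the paper itself offers no proof of this statement: it is quoted from Ore's paper \cite{or}, so there is no internal proof to compare against. Your route is essentially the classical one. The contradiction hypothesis (using $\langle Hg\rangle=\langle H,g\rangle$) gives $G=\bigcup_i M_i$ over the coatoms of $[H,G]$; your reduction to $[H',G]$ with $H'=\bigcap_i M_i$ is exactly the passage to the top interval in the sense of Definition \ref{top}, and its booleanity is precisely Corollary \ref{topBn} of the paper, so you could have invoked that corollary instead of re-deriving it from Birkhoff's theorem in the $J(P)$ form (your antichain argument is nevertheless correct, and is in fact sharper than the "embeds into a boolean lattice" formulation the paper states). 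The final step --- choosing $x_i\in A_i\setminus H'$ for the complementary atoms $A_i=M_i^{\complement}$ and checking that $y=x_1\cdots x_n$ lies in no $M_j$ because all other factors lie in $M_j$ while $x_j\notin M_j$ (from $A_j\cap M_j=H'$) --- is a sound elementary subgroup-closure argument, and the degenerate cases $n=0,1$ are handled. The only cosmetic remark is that the case split on $n$ is not really needed, since the product argument already works for $n\ge 1$ and $n=0$ is immediate; otherwise the proof is complete as written.
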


We have conjectured the following dual version of this theorem:  

\begin{conjecture} \label{dualoreintro}
 Let $[H,G]$ be a distributive interval of finite groups. Then $\exists V$ irreducible complex representation of $G$, with $G_{(V^H)} = H$ (Definition \ref{fixstab}); this property will be called \textbf{linearly primitive}.
\end{conjecture}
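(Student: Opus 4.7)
\section*{Proof Proposal}

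The plan is to construct a specific $G$-subrepresentation of $\mathbb{C}[G/H]$ whose irreducible constituents are all linearly primitive, and then to prove this subrepresentation is nonzero by an inclusion-exclusion argument driven by the distributivity of $[H,G]$.

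First I would reformulate: an irreducible $V$ is linearly primitive iff $V^H \supsetneq V^K$ for every $K > H$ in $[H,G]$, and since $K \mapsto V^K$ is order-reversing, iff $V^H \supsetneq V^A$ for every atom $A$ of $[H,G]$. Next, embed each $\mathbb{C}[G/A]$ as a subrepresentation of $V_H := \mathbb{C}[G/H]$ via the natural map $gA \mapsto \sum_{xH \subset gA} xH$; call the image $V_A$. A direct double-coset computation shows $V_A \cap V_{A'} = V_{A \vee A'}$, and more generally $\bigcap_{A \in S} V_A = V_{K_S}$ where $K_S := \bigvee_{A \in S} A$. Define $U := \sum_{A \text{ atom}} V_A \subseteq V_H$, with $G$-invariant orthogonal complement $U^\perp$. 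Any irreducible $V$ appearing in $U^\perp$ is automatically linearly primitive: its multiplicity in $V_H$ is $\dim V^H$ while its multiplicity in $U$ is at most $\dim \sum_A V^A \le \dim V^H$, and strictness on $U^\perp$ forces $V^A \subsetneq V^H$ for every atom $A$.

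The conjecture therefore reduces to showing $U \subsetneq V_H$. To detect this I would introduce the virtual character
\[
\psi_H := \sum_{K \in [H,G]} \mu_{[H,G]}(H,K)\, \chi_{\mathbb{C}[G/K]}.
\]
Since $[H,G]$ is distributive, Birkhoff gives $[H,G] \cong J(P)$ and the Möbius function of $J(P)$ vanishes outside antichains, so $\psi_H = \sum_{S \subseteq \mathrm{atoms}} (-1)^{|S|} \chi_{\mathbb{C}[G/K_S]}$. The central technical claim is the identity $\psi_H = \chi(U^\perp)$, equivalently the subspace inclusion--exclusion $\dim U = \sum_{\emptyset \neq S} (-1)^{|S|+1} \dim V_{K_S}$. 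This follows from distributivity of the subspace lattice generated by the $V_A$'s inside $V_H$, which I would attempt to prove by showing it is lattice-isomorphic (via $K \mapsto V_K$) to the image of $[H,G]$ under Birkhoff. The key step is the modular identity $V_A \cap (V_{A'} + V_{A''}) = V_{A \vee A'} + V_{A \vee A''}$ for triples of atoms, which I would approach via Mackey-type decompositions of $V_H$ indexed by double cosets, since each $V_K$ is the pullback of functions under $G/H \twoheadrightarrow G/K$.

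Granting the identity, the final step is the non-vanishing $\psi_H \neq 0$, i.e.\ $\sum_S (-1)^{|S|}[G:K_S] > 0$. Here I would invoke Theorem~\ref{oreintro}: an Ore element $g$ with $\langle H,g \rangle = G$ is contained in $K_S$ only when $K_S \geq \langle H,g \rangle$, so such elements contribute positively to a suitable double-counting interpretation of $|H|\cdot\psi_H(1)$ as the number of pairs $(K,g)$ with $g$ ``new at level $K$'' in a sense dual to the usual Ore count. The main obstacle is the subspace-lattice distributivity: this is precisely where the group-theoretic hypothesis must be used non-trivially, and it is not automatic from the intersection law $V_A \cap V_{A'} = V_{A \vee A'}$ alone. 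A secondary obstacle is that the approach produces only irreducibles with $\sum_A V^A \subsetneq V^H$, which is strictly stronger than linear primitivity; the bet is that distributivity of $[H,G]$ forces this stronger property to be realized, but in its absence a refinement would be needed, for instance choosing different test subspaces for different irreducibles and patching via a Helly-type argument over the lattice.
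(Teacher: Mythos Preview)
This statement is Conjecture~\ref{dualoreintro}, which the paper does \emph{not} prove in general; it establishes only the cases of boolean rank $<7$ (Theorem~\ref{B6}) and distributive index $<9720$, by combining the index-$2$ reduction (Theorem~\ref{exten}), the reciprocal-sum criterion (Theorem~\ref{<=2}), and the dual Euler totient criterion (Theorem~\ref{theo2bis}) with exhaustive case analysis. So there is no full proof to compare against; what matters is whether your outline could close the conjecture.

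Your quantity $\psi_H(1)=\sum_{K}\mu_{[H,G]}(H,K)\,|G:K|$ is, in the boolean case, exactly the paper's dual Euler totient $\hat\varphi(H,G)$, and the implication ``$\hat\varphi>0\Rightarrow$ linearly primitive'' is precisely Theorem~\ref{theo2bis} (quoted from \cite{bp}). So the architecture you propose is already one of the paper's tools. The decisive gap is your last step, where you argue $\psi_H(1)>0$ from Theorem~\ref{oreintro}. Ore's theorem yields positivity of the \emph{primal} totient $\varphi(H,G)=\sum_K(-1)^{\ell(K,G)}|K:H|$, which counts the cosets $Hg$ with $\langle Hg\rangle=G$; the remark following Theorem~\ref{theo2} explicitly warns that $\hat\varphi\neq\varphi$ in general, and no double-counting interpretation of $\hat\varphi$ in terms of Ore elements is known. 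Indeed, the inequality $\hat\varphi(H,G)>0$ (even in the weaker form you need) is open---the paper records the sharper bound $\hat\varphi\ge 2^{n-1}$ as Conjecture~\ref{lower}, and all of Section~4 after Theorem~\ref{theo2bis} is devoted to verifying $\hat\varphi>0$ case by case for small indices. Your plan therefore reduces one open conjecture to another, rather than proving it.

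Your step~7 is a second genuine gap, as you yourself note. The intersection law $\bigcap_{A\in S}V_A=V_{K_S}$ does not by itself force the identity $(V_A+V_{A'})\cap V_{A''}=V_{A\vee A''}+V_{A'\vee A''}$ needed for subspace inclusion--exclusion; three lines through the origin in a plane already show that pairwise intersections do not control the dimension of a sum. The Mackey/double-coset approach you sketch is not carried out, and there is no reason visible in the paper or in your proposal why distributivity of $[H,G]$ should transfer to the sublattice of $V_H$ generated by the $V_A$. Since Theorem~\ref{theo2bis} is imported from \cite{bp} rather than proved via your route, this subspace-distributivity claim is not known to hold either.
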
 
The interval $[1,G]$ is linearly primitive if and only if $G$ is linearly primitive (i.e. admits a faithful irreducible complex representation). 
We will see that Conjecture \ref{dualoreintro} reduces to the boolean case, because a distributive interval is \textit{bottom boolean} (i.e. the interval generated by its atoms is boolean).
As application, Conjecture \ref{dualoreintro} leads to a new bridge between combinatorics and representation theory of finite groups:

\begin{definition}
Let $[H,G]$ be any interval. We define the combinatorial invariant $bb\ell(H,G)$ as the minimal length $\ell$ for a chain of subgroups $$H=H_0 < H_1 < \dots < H_{\ell} = G$$ with $[H_i,H_{i+1}]$ bottom boolean. Then, let $bb\ell(G):=bb\ell(1,G)$.
\end{definition}

\begin{application} Assuming Conjecture \ref{dualoreintro}, $bb\ell(G)$ is a non-trivial upper bound for the minimal number of irreducible complex representations of $G$ generating (for $\oplus$ and $\otimes$) the left regular representation.
\end{application} 

\begin{remark} If the normal subgroups of $G$ are also known, note that $$cf\ell(G):=min \{bb\ell(H,G) \mid H \text{ core-free}\}$$ is a better upper bound.  For more details on the applications, see \cite{pa,bp}. \end{remark}

This paper is dedicated to prove Conjecture \ref{dualoreintro} for $[H,G]$ boolean of rank $<7$, or distributive of index $|G:H| < 9720$. For so, we will use the following new result together with two former results:  

\begin{theorem} \label{theo0}
Let $[H,G]$ be a boolean interval and $L$ a coatom with $|G:L| = 2$. If $[H,L]$ is  linearly primitive, then so is $[H,G]$.
\end{theorem}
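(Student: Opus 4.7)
The plan is to build a witness for $[H,G]$ out of a witness for $[H,L]$ via Clifford theory, exploiting both that $|G:L| = 2$ forces $L \triangleleft G$ and the rigidity of the boolean lattice $[H,G]$. Fix an irreducible complex representation $W$ of $L$ with $L_{(W^H)} = H$ given by the hypothesis. The degenerate case $H = L$ is settled immediately by taking $V$ to be the non-trivial character of $G/L \cong \mathbb{Z}/2$; so I assume $H \neq L$, which forces $W^H \neq 0$ (otherwise $L_{(W^H)}$ would equal $L$).

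Clifford theory over the normal subgroup $L$ of index $2$ splits the argument into two cases. If $W$ is not $G$-invariant, then $V := \mathrm{Ind}_L^G W$ is irreducible, and I verify $G_{(V^H)} = H$ directly: writing $V = (1 \otimes W) \oplus (g_0 \otimes W)$ as vector spaces for any fixed $g_0 \in G \setminus L$, any $g \in G \setminus L$ interchanges the two summands and hence cannot pointwise fix a nonzero element of $1 \otimes W^H \subseteq V^H$, which forces $G_{(V^H)} \subseteq L$. A direct expansion of $V^H$ together with $L_{(W^H)} = H$ then gives $L_{(V^H)} = H$, whence $G_{(V^H)} = H$.

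If instead $W$ is $G$-invariant, it extends to two irreducible representations of $G$, namely $\tilde W$ and its twist $\tilde W \otimes \chi$ by the non-trivial character $\chi$ of $G/L$. In either case $V|_L = W$, so $V^H = W^H$ and $L_{(V^H)} = H$. The key step is to pin down $G_{(V^H)}$ using the boolean structure: it is a subgroup of $G$ containing $H$ with $G_{(V^H)} \cap L = H$, i.e.\ an element $K$ of the lattice $[H,G]$ satisfying $K \cap L = H$. In a boolean interval, such elements are exactly $H$ and the unique atom $M_1$ complementary to the coatom $L$. So $G_{(V^H)} \in \{H, M_1\}$, and we are done if this equals $H$ for at least one of the two extensions.

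The remaining scenario is that both extensions give stabilizer $M_1$. Pick any $c \in M_1 \setminus H$; since $M_1 \cap L = H$, we have $c \in G \setminus L$ and thus $\chi(c) = -1$. Then $c \in G_{(\tilde W^H)}$ forces $\tilde W(c)|_{W^H} = \mathrm{id}$, while $c \in G_{((\tilde W \otimes \chi)^H)}$ forces $-\tilde W(c)|_{W^H} = \mathrm{id}$; combined these give $\mathrm{id}_{W^H} = -\mathrm{id}_{W^H}$, contradicting $W^H \neq 0$. The main subtlety is the boolean-lattice reduction of $G_{(V^H)}$ to $\{H, M_1\}$; once that is in hand, the sign twist via $\chi$ delivers the contradiction automatically.
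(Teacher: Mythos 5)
Your proof is correct, and its overall architecture coincides with the paper's: the same Clifford-theoretic dichotomy (induced representation irreducible versus $W$ extending), the same treatment of the induced case (the two summands are swapped by elements outside $L$, forcing $G_{(V^H)}\subseteq L$ and then $=H$), and the same final trick of twisting the extension by the sign character of $G/L$ to derive a contradiction. The one place where you genuinely diverge is the middle of the extension case: the paper pins down $G_{(V_+^H)}$ by an explicit coset computation, writing it as $H\sqcup S\tau$, proving $S^2=H$ and $|S|=|H|$, and then invoking Lemma \ref{2-2} to exclude a second index-$2$ overgroup of $H$; you instead observe directly that $G_{(V^H)}$ is an element of the boolean lattice $[H,G]$ whose meet with the coatom $L$ is $\hat{0}=H$, hence is either $H$ or the complementary atom $L^{\complement}$. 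This lattice-theoretic shortcut is cleaner and bypasses both the $S^2=H$ computation and the appeal to Lemma \ref{2-2}, at the cost of nothing: the sign-twist contradiction then works exactly as in the paper (there the twist is realized as $\pi_{V_-}(\tau)=-\pi_{V_+}(\tau)$, which is your $\tilde W\otimes\chi$). Your handling of the degenerate case $H=L$ and the observation that $W^H\neq 0$ otherwise are correct and welcome explicit touches.
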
 

\begin{theorem}[\cite{pa}] \label{theo1}
A distributive interval $[H,G]$ with $$\sum_{i=1}^{n} \frac{1}{\vert K_i : H  \vert} \le 2$$   for $K_1, \dots , K_n$  the minimal overgroups of $H$, is  linearly primitive. 
\end{theorem}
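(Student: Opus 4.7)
The plan is to apply Clifford theory to the normal (by $[G:L]=2$) subgroup $L \triangleleft G$, guided by the boolean structure of $[H,G]$. Since $L$ is the coatom obtained by removing one atom $A_k$ of $[H,G]$, the standard index formula for boolean intervals gives $[A_k:H] = [G:L] = 2$; choosing $g_0 \in A_k \setminus H$ yields $g_0^2 \in H$ and $g_0 \in N_G(H) \setminus L$. Given a witness $V$ for $[H,L]$ being linearly primitive (an irreducible $L$-representation with $L_{(V^H)}=H$), I would split on the Clifford dichotomy $V \cong V^{g_0}$ versus $V \not\cong V^{g_0}$.

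\textbf{Case A:} If $V \not\cong V^{g_0}$, then $W := \mathrm{Ind}_L^G V$ is irreducible. Because $g_0$ normalizes $H$, the fixed space is $W^H = V^H \oplus g_0 \otimes V^H$. A direct check using $L_{(V^H)}=H$ shows $G_{(W^H)} \cap L = H$; and for any $g = \ell g_0 \in G \setminus L$, writing out $g \cdot w$ on $w = v \oplus g_0 \otimes v'$ shows the two summands get swapped up to twists $\rho_V(\ell g_0^2)$ and $\rho_V(g_0^{-1}\ell g_0)$, and setting $v=0$ with $v' \neq 0$ yields a contradiction. Hence $G_{(W^H)} = H$ whenever $V^H \neq 0$; the degenerate possibility $V^H = 0$ collapses to $H = L$ and $[G:H]=2$, handled by the sign character of $G/H$.

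\textbf{Case B:} If $V \cong V^{g_0}$, then $V$ extends in exactly two ways $W_1, W_2 = W_1 \otimes \chi$ (with $\chi$ the sign character of $G/L$) to irreducible $G$-reps; both have $W_i^H = V^H$ and $G_{(W_i^H)} \cap L = H$. The restrictions $\phi_i := \rho_{W_i}(g_0)|_{V^H}$ are involutions satisfying $\phi_2 = -\phi_1$. Suppose both extensions fail; then each $W_i$ contains some $g_i = \ell_i g_0 \in G \setminus L$ fixing $V^H$ pointwise, so $\rho_V(\ell_i)|_{V^H} = \phi_i^{-1} = \phi_i$. The identity $\rho_V(\ell) V^H = V^{\ell H \ell^{-1}}$ combined with $L_{(V^H)} = H$ forces $\ell H \ell^{-1} \subseteq H$, hence $\ell_i \in N_L(H)$; then $\ell_0 := \ell_1 \ell_2^{-1} \in N_L(H)$ satisfies $\rho_V(\ell_0)|_{V^H} = \phi_1 \cdot (-\phi_1) = -I$, so $\ell_0 \notin H$ and $\ell_0^2 \in H$. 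Therefore $A_j := \langle H, \ell_0 \rangle$ is an atom of $[H,G]$ of index $2$ over $H$, distinct from $A_k$.

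The main obstacle is then the combinatorial step of ruling out the coexistence of two such index-$2$ atoms $A_j, A_k$ inside a boolean interval. By the index formula, $[A_j \vee A_k : H] = [A_j:H][A_k:H] = 4$, so $A_j A_k$ is itself a subgroup, and $H$ -- being normal in the two index-$2$ subgroups $A_j$ and $A_k$ -- is normal in $\langle A_j, A_k \rangle = A_j A_k$. The rank-$2$ boolean sub-interval $[H, A_j A_k]$ has exactly four elements $\{H, A_j, A_k, A_j A_k\}$, forcing the order-$4$ quotient $A_j A_k / H$ to have exactly four subgroups --- impossible, since $\mathbb{Z}/4$ has three and $\mathbb{Z}/2 \times \mathbb{Z}/2$ has five. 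This contradiction completes Case B and hence the proof.
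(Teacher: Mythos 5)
Your proposal does not address the theorem in question: what you have sketched is a proof of the extension result (Theorem \ref{theo0}, proved in the paper as Theorem \ref{exten}), namely that a boolean $[H,G]$ with a coatom $L$ of index $|G:L|=2$ is linearly primitive as soon as $[H,L]$ is. The statement to be proved here is different: it concerns a \emph{distributive} interval satisfying the numerical hypothesis $\sum_{i=1}^{n} 1/|K_i:H| \le 2$ over the minimal overgroups $K_1,\dots,K_n$ of $H$. Nowhere in your argument is that hypothesis used, and conversely your argument assumes data the statement does not supply: an index-$2$ coatom $L$ (none need exist --- e.g.\ $[1,\mathbb{Z}/105]$ satisfies the hypothesis while every edge has odd prime index), and an irreducible $L$-representation $V$ with $L_{(V^H)}=H$, i.e.\ the linear primitivity of $[H,L]$, which is exactly the kind of conclusion being sought and is not available as an induction hypothesis (the condition $\sum_i 1/|K_i:H|\le 2$ is not inherited by $[H,L]$). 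So the Clifford dichotomy you set up has no starting point under the actual hypotheses, and the proof of Theorem \ref{theo1} is entirely missing.

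For comparison, the paper's proof (its group-theoretic form is Theorem \ref{<=2}) first reduces to a boolean bottom interval of rank $n>2$ (Lemma \ref{bottomprim}, Corollaries \ref{bottomBn} and \ref{B2}), then splits on whether some irreducible $V_\alpha$ satisfies $\sum_{i\neq j} V_\alpha^{K_i\vee K_j} \subsetneq V_\alpha^{H}$: if so, Lemma \ref{tech}(6), minimality and the boolean structure leave only $G_{(V_\alpha^H)}=H$ or $G_{(V_\alpha^H)}=K_i$, and the latter is excluded by a tensor-product Claim (taken from the planar-algebra proof in \cite{pa}) combined with distributivity; if not, a counting argument via Lemma \ref{indexrep} yields $\sum_i 1/|K_i:H| \ge 2 + (n-2)/|G:H|$, contradicting the hypothesis since $n>2$. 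None of these ingredients appear in your proposal. As a side remark, even read as a proof of Theorem \ref{theo0}, your Case B relies on the ``index formula'' $|A_j\vee A_k:H| = |A_j:H|\cdot|A_k:H|$, which is not valid in a general boolean interval (cf.\ Lemma \ref{B2Lemma}: in $[D_8,PSL_2(7)]$ opposite edges of a rank-$2$ boolean interval have indices $7$ and $3$); the intended contradiction should instead be obtained as in Lemma \ref{2-2}, where normality of $H$ in the join and Ore's theorem rule out two distinct index-$2$ atoms.
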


\begin{theorem}[\cite{bp}] \label{theo2}
A boolean interval $[H,G]$ with a (below) nonzero dual Euler totient,  is  linearly primitive. $$\hat{\varphi}(H,G):=\sum_{K \in [H,G]} (-1)^{\ell(H,K)} |G:K|$$ 
\end{theorem}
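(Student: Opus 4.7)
The plan is to produce a linearly primitive irreducible representation of $G$ by decomposing $\hat{\varphi}(H,G)$ along the irreducible characters of $G$, and then exploiting the boolean structure of $[H,G]$ to force the stabilizer condition from a nonvanishing alternating sum.

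First I would expand $|G:K|$ via Frobenius reciprocity applied to the permutation representation $\mathbb{C}[G/K] \cong \mathrm{Ind}_K^G \mathbf{1}$: its isotypic decomposition gives
\[ |G:K| \;=\; \sum_V (\dim V)(\dim V^K), \]
where $V$ ranges over the irreducible complex representations of $G$. Substituting into the definition of $\hat{\varphi}$ and swapping the order of summation yields
\[ \hat{\varphi}(H,G) \;=\; \sum_V (\dim V)\, \Phi_V(H,G), \qquad \Phi_V(H,G) := \sum_{K \in [H,G]} (-1)^{\ell(H,K)} \dim V^K. \]
Hence if $\hat{\varphi}(H,G) \neq 0$, at least one irreducible $V$ must satisfy $\Phi_V(H,G) \neq 0$.

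Next I would show that any such $V$ is linearly primitive for $[H,G]$. Let $H_1, \dots, H_n$ denote the atoms of the boolean interval. Since $[H,G]$ is boolean of rank $n$, every $K \in [H,G]$ is a join $K_S = \bigvee_{i \in S} H_i$ for a unique $S \subseteq \{1,\dots,n\}$, with $\ell(H, K_S) = |S|$; together with the general identity $V^{K \vee K'} = V^K \cap V^{K'}$, this rewrites
\[ \Phi_V(H,G) \;=\; \sum_{S \subseteq \{1,\dots,n\}} (-1)^{|S|} \dim \bigcap_{i \in S} V^{H_i}. \]
I then claim that the pointwise stabilizer condition $G_{(V^H)} = H$ is equivalent to $V^{H_i} \subsetneq V^H$ for every atom $H_i$: if a proper overgroup $L > H$ pointwise fixes $V^H$ then $V^L = V^H$, and for any atom $H_i \leq L$ one has $V^L \subseteq V^{H_i} \subseteq V^H$, forcing $V^{H_i} = V^H$. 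Contrapositively, assume some atom, say $H_1$, satisfies $V^{H_1} = V^H$; pair each subset $S \not\ni 1$ with $S \cup \{1\}$. Since $V^{K_{S \cup \{1\}}} = V^{H_1} \cap V^{K_S} = V^H \cap V^{K_S} = V^{K_S}$, the two contributions $(-1)^{|S|}\dim V^{K_S}$ and $(-1)^{|S|+1}\dim V^{K_{S \cup \{1\}}}$ cancel, so $\Phi_V(H,G) = 0$, contradicting the hypothesis.

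Combining the two steps, the nonvanishing of $\hat{\varphi}(H,G)$ produces an irreducible $V$ with $\Phi_V(H,G) \neq 0$, and this $V$ is linearly primitive for $[H,G]$. The delicate point is the second step: the pairing argument relies essentially on the boolean structure, both to index subgroups by subsets of atoms and to identify the M\"obius sign $(-1)^{\ell(H,K)}$ with the parity $(-1)^{|S|}$; the representation-theoretic input $V^{K \vee K'} = V^K \cap V^{K'}$, by contrast, holds for any pair of subgroups.
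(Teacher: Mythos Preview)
The paper does not itself prove this theorem; both here (Theorem~\ref{theo2}) and at its later restatement (Theorem~\ref{theo2bis}) it is quoted from \cite{bp}. Your argument is correct and is in fact built out of precisely the two ingredients the paper imports from \cite{bp} for this purpose: the index formula $|G:K|=\sum_V \dim V\,\dim V^K$ (Lemma~\ref{indexrep}) and the identities in Lemma~\ref{tech}, in particular $V^{K\vee K'}=V^K\cap V^{K'}$ and $V^{G_{(V^H)}}=V^H$. Decomposing $\hat\varphi$ along irreducibles and then running the involution $S\mapsto S\cup\{1\}$ on the boolean lattice to kill $\Phi_V$ whenever some atom satisfies $V^{H_1}=V^H$ is exactly the intended mechanism, so your proof matches the approach of \cite{bp}.

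One small point worth making explicit in your write-up: when you pass from ``$G_{(V^H)}\neq H$'' to ``there exists an atom $H_i\le G_{(V^H)}$'', you are using that $G_{(V^H)}$ lies in the interval $[H,G]$ (clear, since $H\subseteq G_{(V^H)}\subseteq G$) and that in a boolean lattice every nonminimal element dominates an atom. You use this implicitly, but it is the one place where the argument would break for a general lattice, so it deserves a word.
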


\begin{remark}[\cite{bp}] The Euler totient  $\varphi(H,G)=\sum (-1)^{\ell(K,G)} |K:H|$ is the number of cosets $Hg$ with $\langle Hg \rangle = G$, so $\varphi>0$ by Theorem \ref{oreintro}; but in general $\hat{\varphi} \neq \varphi$. We extend $\varphi$ to any distributive interval as $$\varphi(H,G) = |T:H| \cdot \varphi(T,G)$$ with $[T,G]$ the top interval of $[H,G]$, so that for $n = \prod_i p_i^{n_i}$,  $$\varphi(1,\mathbb{Z}/n) = \prod_ip_i^{n_i-1} \cdot  \prod_i(p_i-1)$$ which is the usual Euler totient $\varphi (n)$. Idem for $\hat{\varphi}$ and bottom interval.
\end{remark}

We will also translate our planar algebraic proof of Theorem \ref{theo1} in the group theoretic framework (one claim excepted).

\tableofcontents

\section{Preliminaries on lattice theory} \label{2}
\begin{definition}
A lattice $(L, \wedge , \vee)$ is a partially ordered set (or poset) $L$  in which every two elements $a,b$ have a unique supremum (or join) $a \vee b$ and a unique infimum (or meet) $a \wedge b$.
\end{definition}  

\begin{example}  Let $G$ be a finite group. The set of subgroups $ K \subseteq G$ is a lattice,  denoted by $\mathcal{L}(G)$, ordered by $\subseteq$, with $K_1 \vee K_2 = \langle K_1,K_2 \rangle$ and $K_1 \wedge K_2 =  K_1 \cap K_2 $. \end{example}
 
\begin{definition}
A sublattice of $(L, \wedge , \vee)$ is a subset $L' \subseteq L$ such that $(L', \wedge , \vee)$ is also a lattice. Let $a,b \in L$ with $a \le b$, then the interval $[a,b]$ is the sublattice $\{c \in L \ \vert \ a \le c \le b \}$. \end{definition}  

\begin{definition}
A finite lattice $L$ admits a minimum and a maximum, called $\hat{0}$ and $\hat{1}$. 
\end{definition} 

\begin{definition} \label{defatom}
An atom is an element $a \in L$ such that $$\forall b \in L, \ \hat{0} < b \le a \Rightarrow a=b.$$ A coatom is an element $c \in L$ such that $$\forall b \in L, \  c \le b < \hat{1} \Rightarrow b=c.$$
\end{definition} 

\begin{definition} \label{top}  \label{bottom}
The top interval of a finite lattice $L$ is the interval $[t,\hat{1}]$ with $t$ the meet of all the coatoms. The bottom interval is  the interval $[\hat{0},b]$ with $b$ the join of all the atoms.
\end{definition}

\begin{definition}
The length of a finite lattice $L$ is the greatest length $\ell$ of a chain $ 0 < a_1 < a_2 < \cdots < a_{\ell} = 1$ with $a_i \in L$.
\end{definition} 

\begin{definition}
A lattice $(L, \wedge , \vee)$ is distributive if $\forall a,b,c \in L$: 
$$a \vee (b \wedge c) = (a \vee b) \wedge (a \vee c)$$  
 (or equivalently,  $\forall a,b,c \in L, \  a \wedge (b \vee c) = (a \wedge b) \vee (a \wedge c)$). \end{definition}  
 
 \begin{lemma} \label{distri}
The reverse lattice and the sublattices of a  distributive lattice are also distributive. Idem for concatenation and direct product. \end{lemma}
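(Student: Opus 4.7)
The plan is to verify the four claims separately, using in each case the fact that distributivity is a first-order identity in $(\wedge, \vee)$ together with the very simple descriptions of the meet/join operations in the relevant constructions.

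First, for the reverse (dual) lattice, I would invoke the equivalence of the two forms of the distributive law stated in the definition: the reverse lattice is obtained by swapping $\wedge$ and $\vee$, which turns the identity $a \vee (b \wedge c) = (a \vee b) \wedge (a \vee c)$ into $a \wedge (b \vee c) = (a \wedge b) \vee (a \wedge c)$, and vice versa. Since the two are equivalent, distributivity is self-dual. For a sublattice $L' \subseteq L$, by definition $L'$ is closed under the operations $\wedge, \vee$ of $L$ and they agree on $L'$; so the distributive identity, which holds for all triples in $L$, holds in particular for all triples in $L'$.

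For the direct product $L_1 \times L_2$ of distributive lattices, the operations are coordinate-wise: $(a_1,a_2) \wedge (b_1,b_2) = (a_1 \wedge b_1, a_2 \wedge b_2)$ and similarly for $\vee$. Each coordinate satisfies the distributive identity by assumption, so the pair does as well. For the concatenation (ordinal sum) $L_1 \oplus L_2$, where every element of $L_1$ is set below every element of $L_2$, I would check the distributive identity $a \wedge (b \vee c) = (a \wedge b) \vee (a \wedge c)$ by case analysis on which of $L_1, L_2$ contains each of $a,b,c$. If all three lie in the same component, the identity follows from distributivity of that component. In the mixed cases one uses that whenever $x \in L_1$ and $y \in L_2$ we have $x \wedge y = x$ and $x \vee y = y$, which reduces each side of the identity to a single element and both sides collapse to the same value.

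I do not expect a serious obstacle: the whole lemma is a routine verification, and the only mildly tedious step is the case analysis for the concatenation, which amounts to at most the $2^3 = 8$ membership patterns for $(a,b,c)$ and, by the symmetry between $L_1$ and $L_2$, really only a handful of genuinely distinct cases.
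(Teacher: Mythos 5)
Your verification is correct and complete: self-duality of the distributive law, inheritance of identities by sublattices, coordinatewise checking for products, and the $8$-case analysis for the ordinal sum all go through exactly as you describe. The paper itself gives no proof of this lemma (it is treated as standard), so your routine argument is precisely what is implicitly intended.
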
    

\begin{definition}\label{comp}
A distributive lattice is called boolean if any element $b$ admits a unique complement $b^{\complement}$ (i.e. $b \wedge b^{\complement} = \hat{0}$ and $b \vee b^{\complement} = \hat{1}$). 
\end{definition}

\begin{example}
The subset lattice of $\{1,2, \dots, n \}$, for union and intersection, is called the boolean lattice $B_n$ of rank $n$ (see $B_3$ below).
\begin{scriptsize}
\begin{center} $\begin{tikzpicture}
\node (A1) at (0,0) {$\{1,2,3\}$};
\node (A2) at (-1,-1) {$\{1,2\}$};
\node (A3) at (0,-1) {$\{1,3\}$};
\node (A4) at (1,-1) {$\{2,3\}$};
\node (A8) at (-1,-2) {$\{1\}$};
\node (A9) at (0,-2) {$\{2\}$};
\node (A10) at (1,-2) {$\{3\}$};
\node (A14) at (0,-3) {$\emptyset$};
\tikzstyle{segm}=[-,>=latex, semithick]
\draw [segm] (A1)to(A2); \draw [segm] (A3)to(A8); 
\draw [segm] (A1)to(A3); \draw [segm] (A3)to(A10);  
\draw [segm] (A1)to(A4); \draw [segm] (A4)to(A9);  
\draw [segm] (A14)to(A9); \draw [segm] (A14)to(A10); 
 \draw [segm] (A14)to(A8); \draw [segm] (A2)to(A9);
\draw [segm] (A2)to(A8); \draw [segm] (A4)to(A10); 

\end{tikzpicture} 
$ \end{center}
\end{scriptsize}
 \end{example}
 
\begin{remark}
Any finite boolean lattice is isomorphic to some $B_n$.  
\end{remark}

\begin{theorem}[Birkhoff's representation theorem or FTFDL \cite{sta}]
Any finite distributive lattice embeds into a finite boolean lattice.  
\end{theorem}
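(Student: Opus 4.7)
The plan is to prove Birkhoff's theorem by explicitly constructing the embedding via join-irreducible elements, which sends $L$ into the powerset $\mathcal{P}(J(L))$ where $J(L)$ denotes the set of join-irreducibles of $L$, and to invoke distributivity only at the single point where it is truly needed.

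First I would introduce the auxiliary notion: call $j \in L \setminus \{\hat{0}\}$ \emph{join-irreducible} if $j = a \vee b$ forces $j = a$ or $j = b$, and write $J(L)$ for the set of such elements. By induction on $\ell(\hat{0}, x)$, any $x \in L$ can be expressed as the join of the join-irreducibles below it: if $x \in J(L) \cup \{\hat{0}\}$ this is immediate, otherwise write $x = a \vee b$ with $a, b < x$ and apply the inductive hypothesis to $a$ and $b$. Thus $x = \bigvee \{j \in J(L) : j \le x\}$ for every $x \in L$; this does not require distributivity.

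Next I define the candidate embedding $\phi : L \to \mathcal{P}(J(L))$ by $\phi(x) = \{j \in J(L) : j \le x\}$. Meet-preservation $\phi(a \wedge b) = \phi(a) \cap \phi(b)$ is tautological, and injectivity is immediate from the previous paragraph, since $\phi(x)$ determines $x$ as its join. Order-preservation is clear. Finally I would check join-preservation $\phi(a \vee b) = \phi(a) \cup \phi(b)$, where $\supseteq$ is automatic; for $\subseteq$, take $j \in J(L)$ with $j \le a \vee b$ and compute, using distributivity, $j = j \wedge (a \vee b) = (j \wedge a) \vee (j \wedge b)$, whence join-irreducibility of $j$ forces $j = j \wedge a$ or $j = j \wedge b$, i.e.\ $j \le a$ or $j \le b$. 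Thus $\phi$ is a lattice monomorphism into the boolean lattice $B_{|J(L)|}$.

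There is no real obstacle here: the argument is classical and the only subtle ingredient is the implication ``$j \in J(L)$ and $j \le a \vee b$ imply $j \le a$ or $j \le b$'', which is precisely where distributivity is used and without which the proof collapses (a non-distributive lattice such as the diamond $M_3$ or the pentagon $N_5$ provides a counterexample to the conclusion). Note that the image of $\phi$ is in fact the sublattice of order ideals of the induced poset on $J(L)$, so one obtains the sharper Birkhoff representation theorem for free, but the embedding statement we need is exactly $\phi : L \hookrightarrow B_{|J(L)|}$.
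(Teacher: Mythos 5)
Your proof is correct: the decomposition of each element as the join of the join-irreducibles below it (no distributivity needed), the map $\phi(x)=\{j\in J(L): j\le x\}$ into $\mathcal{P}(J(L))\cong B_{|J(L)|}$, injectivity and meet-preservation for free, and distributivity invoked exactly once to get $j\le a\vee b \Rightarrow j\le a$ or $j\le b$ for join-irreducible $j$ — this is the classical Birkhoff argument, and every step checks out (including the degenerate cases $x=\hat{0}$ and the one-element lattice). The paper itself gives no proof of this statement: it is quoted as the fundamental theorem of finite distributive lattices with a citation to Stanley, and the argument in that reference is essentially the one you wrote (indeed your closing remark that the image consists precisely of the order ideals of the induced poset on $J(L)$ is the sharper form $L\cong J(I(P))$ that Stanley states, and it is also what the paper actually uses afterwards to deduce that the top and bottom intervals of a distributive lattice are boolean). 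So there is nothing to correct; you have simply supplied in full the standard proof that the paper outsources to the literature.
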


%

\begin{corollary} \label{topBn} \label{bottomBn} 
The top and bottom intervals of a distributive lattice are boolean.
 \end{corollary}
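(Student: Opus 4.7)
The plan is to prove the claim for the top interval $[t,\hat{1}]$; the bottom interval case is completely dual. By Definition \ref{comp}, it suffices to show $[t,\hat{1}]$ is distributive and complemented. Distributivity is immediate from Lemma \ref{distri}, since $[t,\hat{1}]$ is a sublattice of the distributive lattice $L$.

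I would first observe that the coatoms of the sublattice $[t,\hat{1}]$ coincide with the coatoms $c_1,\dots,c_k$ of $L$: each $c_i \ge t = \bigwedge_i c_i$ lies in the interval, and a strict cover of $c_i$ inside $[t,\hat{1}]$ is a strict cover in $L$, hence equal to $\hat{1}$. Conversely, any element of $L$ strictly above a coatom of $[t,\hat{1}]$ automatically lies in $[t,\hat{1}]$ since it dominates $t$, so coatomicity transfers in the other direction as well.

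The core step is to exhibit a complement for an arbitrary $a \in [t,\hat{1}]$. I would partition the coatoms into $I = \{i : a \le c_i\}$ and $J = \{j : a \not\le c_j\}$ and set
$$a^{\complement} := \bigwedge_{j \in J} c_j,$$
with the empty meet interpreted as $\hat{1}$. Applying distributivity of join over a finite meet (promoted from the binary law by induction) gives
$$a \vee a^{\complement} = \bigwedge_{j \in J}(a \vee c_j),$$
and for each $j \in J$ the inequality $a \vee c_j > c_j$ combined with the coatom property of $c_j$ in $L$ forces $a \vee c_j = \hat{1}$. On the other hand, $a \wedge a^{\complement} \ge t$ is clear since both $a$ and $a^{\complement}$ lie in $[t,\hat{1}]$, while $a \le \bigwedge_{i \in I} c_i$ yields
$$a \wedge a^{\complement} \le \Big(\bigwedge_{i \in I} c_i\Big) \wedge \Big(\bigwedge_{j \in J} c_j\Big) = \bigwedge_i c_i = t.$$

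No step presents a real obstacle; the only care required is in the degenerate cases ($a = t$ gives $J = \emptyset$ and $a^{\complement} = \hat{1}$; $a = \hat{1}$ gives $I = \emptyset$ and $a^{\complement} = t$) and in the inductive promotion of binary distributivity. The bottom interval case is handled by the dual construction: atoms of $L$ coincide with atoms of $[\hat{0},b]$, and the complement of $a \in [\hat{0},b]$ is the join of those atoms not lying below $a$.
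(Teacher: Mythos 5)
Your proof is correct, and it takes a genuinely different route in the sense that the paper does not give an argument at all: its proof of this corollary is just a citation to Stanley (\cite[items a--i, pp.~254--255]{sta}) together with Lemma \ref{distri}. Your self-contained construction of the complement of $a$ as the meet of the coatoms \emph{not} above $a$ is the standard textbook argument (essentially the one in Stanley), so you are unfolding the cited proof rather than replacing it. You correctly handle the one genuinely subtle point, namely that the coatoms of $[t,\hat{1}]$ coincide with the coatoms of $L$ (because any $b\ge d\ge t$ automatically lies in the interval, so coatomicity transfers in both directions), and the degenerate cases $J=\emptyset$, $I=\emptyset$ check out. One cosmetic remark: Definition \ref{comp} requires the complement to be \emph{unique}, so you are implicitly using the standard fact that in a distributive lattice a complement, when it exists, is automatically unique; it would be cleaner to say this in one line, but it is not a gap.
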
   
 \begin{proof}
See \cite[items a-i p254-255]{sta}, together with Lemma \ref{distri}. \end{proof} 

%

\section{A dual version of Ore's theorem}

In this section, we will state the dual version of Ore's theorem, and prove it for any boolean interval of rank $\le 4$, after Theorem \ref{theo1} proof.


\begin{definition} \label{fixstab} Let $W$ be a representation of a group $G$, $K$ a subgroup of $G$, and $X$ a subspace of $W$. We define the \textit{fixed-point subspace} $$W^{K}:=\{w \in W \ \vert \  kw=w \ , \forall k \in K  \}$$ and the \textit{pointwise stabilizer subgroup} $$G_{(X)}:=\{ g \in G \  \vert \ gx=x \ , \forall x \in X \}$$  \end{definition} 

\begin{lemma}{\cite[Section 3.2]{bp}} \label{tech} Let $G$ be a finite group, $H,K$ two subgroups, $V$ a complex representation of $G$ and $X,Y$ two subspaces. Then
\begin{itemize}
\item[(1)] $H \subseteq K \Rightarrow V^{K} \subseteq V^{H}$
\item[(2)] $X \subseteq Y \Rightarrow G_{(Y)} \subseteq G_{(X)}$
\item[(3)] $V^{H \vee K} = V^H \cap V^K $
\item[(4)] $H \subseteq G_{(V^H)}$
\item[(5)] $ V^{G_{(V^H)}} = V^H$
\item[(6)] $[H \subseteq K$ and $V^{K} \subsetneq V^{H}]$ $\Rightarrow K \not \subseteq G_{(V^H)}$
\end{itemize}

\end{lemma}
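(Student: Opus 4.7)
The plan is to verify the six items essentially straight from the definitions, in the order given, so that each may use its predecessors. The whole lemma is the formal statement of a Galois-style connection between the two operators $K \mapsto V^K$ and $X \mapsto G_{(X)}$ of Definition \ref{fixstab}, and there is no real obstacle to overcome; the only item with any content is (3).

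For (1) and (2), I would just unfold definitions: if $v$ is fixed by every element of $K$ and $H \subseteq K$, then it is fixed by every element of $H$, giving $V^K \subseteq V^H$; and if $g$ fixes every point of $Y \supseteq X$, it fixes every point of $X$. These are the two order-reversing properties of the connection.

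For (3), the inclusion $V^{H \vee K} \subseteq V^H \cap V^K$ is immediate from (1) applied to $H \subseteq H \vee K$ and $K \subseteq H \vee K$. For the reverse inclusion I would use that $H \vee K = \langle H, K \rangle$ in the subgroup lattice: any $v$ fixed by both $H$ and $K$ is fixed by every word in $H \cup K$, hence by every element of $\langle H, K \rangle$, so $v \in V^{H \vee K}$. This is the one step that uses the group structure rather than pure order theory. Item (4) is immediate from the definitions: for every $h \in H$ and every $w \in V^H$ we have $hw = w$, which is exactly the membership $h \in G_{(V^H)}$.

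For (5), the inclusion $V^{G_{(V^H)}} \subseteq V^H$ follows from (4) via the order-reversing property (1): $H \subseteq G_{(V^H)}$ gives $V^{G_{(V^H)}} \subseteq V^H$. Conversely, every $v \in V^H$ is fixed by every element of $G_{(V^H)}$ by the very definition of $G_{(V^H)}$, so $v \in V^{G_{(V^H)}}$. Finally (6) I would prove by contradiction: if $K \subseteq G_{(V^H)}$, then (1) gives $V^{G_{(V^H)}} \subseteq V^K$, and combined with (5) this yields $V^H \subseteq V^K$; together with (1) applied to $H \subseteq K$ one gets $V^H = V^K$, contradicting the strict inclusion $V^K \subsetneq V^H$. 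Thus the implication holds in its contrapositive form, and the lemma is complete.
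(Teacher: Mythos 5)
Your proof is correct: all six items follow by the routine definition-unwinding you describe, and your derivations of (3), (5), and (6) from the earlier items are exactly the standard Galois-connection argument. The paper does not prove this lemma itself but cites it to \cite[Section 3.2]{bp}, where the verification is the same elementary one, so there is nothing to add.
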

\begin{lemma}{\cite{bp}} \label{indexrep}
Let $V_1, \dots , V_r$ be  the irreducible complex representations of a finite group $G$ (up to equivalence), and $H$ a subgroup. Then $$|G:H| = \sum_{i=1}^r \dim(V_i)\dim(V_i^H).$$   
\end{lemma}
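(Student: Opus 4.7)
The plan is to identify both sides of the claimed identity as the dimension of the same $G$-module, namely the complex permutation representation on the left coset space $G/H$. First I would observe the trivial equality $|G:H|=\dim_{\mathbb{C}} \mathbb{C}[G/H]$, and recognise $\mathbb{C}[G/H]$ as the induced representation $\mathrm{Ind}_{H}^{G}(\mathbf{1}_H)$ of the trivial character of $H$.

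Next I would compute the decomposition of $\mathrm{Ind}_{H}^{G}(\mathbf{1}_H)$ into irreducibles. By Frobenius reciprocity, for each $i$ the multiplicity of $V_i$ in $\mathrm{Ind}_{H}^{G}(\mathbf{1}_H)$ is
\[
\dim_{\mathbb{C}}\mathrm{Hom}_G(V_i,\mathrm{Ind}_{H}^{G}(\mathbf{1}_H))=\dim_{\mathbb{C}}\mathrm{Hom}_H(\mathrm{Res}_{H}^{G}V_i,\mathbf{1}_H).
\]
The right-hand side is the dimension of the space of $H$-invariant linear functionals on $V_i$, i.e.\ $\dim (V_i^\ast)^H$. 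Since $H$ is finite, the averaging idempotent $\frac{1}{|H|}\sum_{h\in H}h$ acts on $V_i$ and on $V_i^\ast$ with the same trace (namely $\dim V_i^H$), so $\dim (V_i^\ast)^H = \dim V_i^H$. Hence the multiplicity of $V_i$ in $\mathrm{Ind}_{H}^{G}(\mathbf{1}_H)$ equals $\dim V_i^H$.

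Taking dimensions of the decomposition $\mathrm{Ind}_{H}^{G}(\mathbf{1}_H)\cong \bigoplus_{i=1}^{r} V_i^{\oplus \dim V_i^H}$ then yields
\[
|G:H|=\sum_{i=1}^{r}\dim(V_i)\dim(V_i^H),
\]
as required.

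There is no real obstacle here: the statement is a direct application of Frobenius reciprocity to the permutation representation on $G/H$, and the only mild technical point is the identification $\dim (V_i^\ast)^H=\dim V_i^H$, which follows from the averaging argument above (equivalently, from unitarisability of $V_i$). An alternative, equally short route would be to start from the decomposition of the regular representation $\mathbb{C}[G]\cong\bigoplus_i V_i\otimes V_i^\ast$ and take right-$H$-invariants on both sides, after which $\mathbb{C}[G]^H=\mathbb{C}[G/H]$ gives the same conclusion.
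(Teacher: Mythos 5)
Your proof is correct. The paper gives no proof of this lemma, simply citing \cite{bp}; your argument --- identifying $|G:H|$ with $\dim\mathbb{C}[G/H]=\dim\mathrm{Ind}_H^G(\mathbf{1}_H)$ and computing multiplicities via Frobenius reciprocity, including the correct handling of the identification $\dim(V_i^\ast)^H=\dim V_i^H$ --- is the standard and complete derivation of this fact.
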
 

 \begin{definition} 
An interval of finite groups $[H,G]$ is called linearly primitive if there is an irreducible complex representation $V$ of $G$ such that $G_{(V^H)} = H$.
\end{definition}

\begin{remark} The interval $[1,G]$ is linearly primitive iff $G$ is linearly primitive (i.e. it admits an irreducible faithful complex representation). 
\end{remark}
The dual version of Ore's Theorem \ref{oreintro} is the following:
\begin{conjecture} A distributive interval $[H,G]$ is linearly primitive.
\end{conjecture}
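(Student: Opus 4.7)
The plan is to prove the conjecture in two stages: first reduce from the distributive case to the boolean case via the bottom interval, then settle the boolean case through the dual Euler totient of Theorem \ref{theo2}.

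\textbf{Stage (a): reduction to boolean.} Let $[H,G]$ be distributive, and let $b$ be the join of its atoms. By Corollary \ref{bottomBn} the interval $[H,b]$ is boolean, and every atom of $[H,G]$ is already an atom of $[H,b]$. Combining Lemma \ref{tech} (6) with its obvious converse (if $K\supseteq H$ and $V^K=V^H$ then every element of $K$ fixes $V^H$ pointwise, so $K\subseteq G_{(V^H)}$), for any irreducible representation $W$ of $G$ the condition $G_{(W^H)}=H$ is equivalent to $W^K\subsetneq W^H$ for every atom $K$ of $[H,G]$. Assume $[H,b]$ is linearly primitive, witnessed by an irreducible $b$-representation $V_0$, so $V_0^K\subsetneq V_0^H$ for every atom $K\in[H,b]$. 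Pick any irreducible constituent $W$ of $\mathrm{Ind}_b^G V_0$; by Frobenius reciprocity and complete reducibility of $b$-representations, $V_0$ embeds into $W|_b$. Writing $W|_b=V_0\oplus V'$ yields $W^K=V_0^K\oplus(V')^K$ and $W^H=V_0^H\oplus(V')^H$; since $V_0^K\subsetneq V_0^H$ strictly while $(V')^K\subseteq(V')^H$, we conclude $W^K\subsetneq W^H$, so $[H,G]$ is linearly primitive via $W$.

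\textbf{Stage (b): the boolean case.} Assume $[H,G]$ is boolean of rank $n$ with atoms $K_1,\ldots,K_n$. I would aim to show $\hat\varphi(H,G)\neq 0$ and invoke Theorem \ref{theo2}. Expanding through Lemma \ref{indexrep} and then using Lemma \ref{tech} (3) to replace $V^{\bigvee_{i\in S}K_i}$ by $\bigcap_{i\in S}V^{K_i}$,
$$\hat\varphi(H,G)=\sum_V\dim(V)\cdot\hat\mu(V),\qquad\hat\mu(V):=\sum_{S\subseteq[n]}(-1)^{|S|}\dim\Bigl(\bigcap_{i\in S}V^{K_i}\Bigr),$$
where $V$ ranges over the irreducibles of $G$. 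The strategy is to induct on $n$: Theorem \ref{theo0} handles coatoms of index $2$, and one would seek an analogous coatom-slicing argument for coatoms of higher prime index, relating $\hat\varphi(H,G)$ to $\hat\varphi(H,L)$ for a coatom $L$ via the representation theory of the cyclic quotient $G/L$ (exploiting Clifford theory to split irreducibles of $G$ into those coming from $L$ and those obtained by extension/induction).

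\textbf{Main obstacle.} The crux is ruling out cancellations in $\hat\varphi(H,G)$ uniformly across all boolean intervals of finite groups; this is genuinely group-theoretic rather than purely combinatorial, since no general classification of boolean intervals in finite groups is available (compare the P\'alfy--Pudl\'ak problem). Extending Theorem \ref{theo0} past index $2$ is the most concrete subgoal, and a non-combinatorial identity exhibiting $\hat\varphi$ as a manifestly positive character-theoretic quantity would also suffice. Neither is currently in hand, which is precisely why the present paper restricts to rank $<7$ or index $<9720$: beyond those bounds the cancellation problem outstrips present tools, and a full proof of the conjecture seems to require a new structural or character-theoretic input.
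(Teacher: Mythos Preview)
The statement you are attempting is a \emph{conjecture} in the paper, not a theorem: the paper does not prove it in general, only under the hypotheses rank $<7$ (Theorem \ref{B6}) or index $<9720$. So there is no full ``paper's own proof'' to compare against, and you correctly identify this in your final paragraph.

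Your Stage (a) is a correct, self-contained proof of the reduction from distributive to boolean. This is exactly the content of Lemma \ref{bottomprim} (imported from \cite{bp}), and your Frobenius-reciprocity argument is a clean way to see it: once $V_0$ sits inside $W|_b$ as a summand, the strict inclusion $V_0^K\subsetneq V_0^H$ for each atom $K$ forces $W^K\subsetneq W^H$, hence $G_{(W^H)}=H$.

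Your Stage (b), as you yourself say, is incomplete. Two remarks on the strategy. First, proving $\hat\varphi(H,G)\neq 0$ for every boolean interval is not merely a route to the conjecture, it is a separate open problem in the paper: Conjecture \ref{lower} asserts the stronger bound $\hat\varphi\ge 2^{n-1}$, and even the weaker nonvanishing is not established in general. So your Stage (b) would in effect prove a statement at least as hard as (and plausibly harder than) the target conjecture. Second, the paper's actual partial results do \emph{not} proceed solely through $\hat\varphi$: Theorem \ref{B6} for rank $<7$ combines the index-$2$ reduction (Theorem \ref{exten}) with Theorem \ref{<=2}, which is the inequality criterion $\sum_i |K_i:H|^{-1}\le 2$ and has nothing to do with $\hat\varphi$. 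The $\hat\varphi$ machinery only enters in the index $<9720$ argument, and even there only after rank and index-$2$ reductions have been exhausted. So a genuine attack on the full conjecture likely needs both strands, or a new idea replacing Claim \ref{claim} (the one step the paper still defers to planar-algebra methods).
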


 \begin{lemma} \label{B1}
A boolean interval $[H,G]$ of rank $1$ is linearly primitive. \end{lemma}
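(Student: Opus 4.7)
The plan is to exploit that a boolean interval $[H,G]$ of rank $1$ collapses to the two-element chain $H < G$, so that for any complex representation $V$ of $G$ the pointwise stabilizer $G_{(V^H)}$ must, by Lemma \ref{tech}(4), lie in $[H,G]$ and hence equal either $H$ or $G$. Therefore linear primitivity will amount to producing an irreducible complex representation $V$ with $G_{(V^H)} \neq G$, or equivalently (by items (1), (4), (5) of Lemma \ref{tech}) with $V^H \neq V^G$: indeed $V^G \subseteq V^H$ holds automatically, while $G_{(V^H)}=G$ forces $V^H = V^{G_{(V^H)}} = V^G$ via item (5).

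Next, I would argue by contradiction. Assume every irreducible complex representation $V_i$ of $G$ satisfies $V_i^H = V_i^G$, so $\dim V_i^H = \dim V_i^G$ for all $i$. Weighting each equality by $\dim(V_i)$, summing over $i$, and applying Lemma \ref{indexrep} both to the subgroup $H$ and to $G$ itself, I obtain
$$|G:H| = \sum_{i=1}^r \dim(V_i)\dim(V_i^H) = \sum_{i=1}^r \dim(V_i)\dim(V_i^G) = |G:G| = 1,$$
which contradicts $H < G$. Hence some irreducible $V$ satisfies $V^H \neq V^G$, and for such a $V$ the pointwise stabilizer $G_{(V^H)}$ is in $[H,G]$ but not equal to $G$, forcing $G_{(V^H)} = H$ as required.

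There is no serious obstacle here: the argument is a one-line counting reduction and relies only on Lemma \ref{indexrep} together with the elementary observation that the rank-$1$ boolean hypothesis restricts the map $K \mapsto G_{(V^K)}$ on $[H,G]$ to a two-element codomain. This lemma serves as the base case of the induction on boolean rank that will underlie the later sections of the paper.
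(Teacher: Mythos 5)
Your proof is correct and follows essentially the same route as the paper: use maximality of $H$ to reduce to showing $G_{(V^H)}\neq G$ for some irreducible $V$, which the paper phrases as ``a non-trivial irreducible $V$ with $V^H\neq 0$ exists.'' Your only addition is that you justify the existence of such a $V$ explicitly via the counting identity of Lemma \ref{indexrep}, a step the paper leaves implicit.
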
    
\begin{proof} Note that $[H,G]$  is of rank $1$ iff $H$ is a maximal subgroup of $G$. Let $V$ be a non-trivial irreducible complex representation of $G$ with $V^H \neq \emptyset$, by Lemma \ref{tech} (4), $H \subseteq G_{(V^H)}$. If $G_{(V^H)} = G$ then $V$ must be trivial (by irreducibility), so by maximality $G_{(V^H)} = H$.
\end{proof}  

\begin{lemma}{\cite[Lemma 3.37]{bp}} \label{bottomprim}
An interval $[H,G]$ is linearly primitive if its bottom interval $[H,B]$ is so (see Definition \ref{bottom}).
\end{lemma}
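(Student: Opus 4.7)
The plan is to promote an irreducible representation $V$ of $B$ witnessing linear primitivity of $[H,B]$ to an irreducible representation $W$ of $G$ witnessing linear primitivity of $[H,G]$, using induction of representations. The natural candidate is any irreducible constituent $W$ of $\mathrm{Ind}_B^G V$; by Frobenius reciprocity such a $W$ exists, and $V$ appears as a direct summand of $W|_B$, so I may write $W|_B = V \oplus V'$ for some complementary $B$-representation $V'$.

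Next I would reduce the goal $G_{(W^H)} = H$ to a statement about atoms. Since $H \subseteq G_{(W^H)}$ always (Lemma \ref{tech} (4)), equality fails iff there is a subgroup $K$ with $H \subsetneq K \subseteq G_{(W^H)}$. Any such $K$ contains an atom of $[H,G]$: pick a minimal element of $[H,K]\setminus \{H\}$, which is easily seen to be minimal in $[H,G]\setminus \{H\}$ as well. By the contrapositive of Lemma \ref{tech} (6), it therefore suffices to verify $W^{K_i} \subsetneq W^H$ for every atom $K_i$ of $[H,G]$.

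To check this, I would use that every atom $K_i$ satisfies $H \subseteq K_i \subseteq B$ (since $B$ is the join of the atoms), so both fixed-point subspaces may be computed inside $W|_B$ and decompose as $W^H = V^H \oplus (V')^H$ and $W^{K_i} = V^{K_i} \oplus (V')^{K_i}$. The key inequality $V^{K_i} \subsetneq V^H$ holds: an equality would mean $K_i$ fixes $V^H$ pointwise, forcing $K_i \subseteq B_{(V^H)} = H$, which contradicts $H \subsetneq K_i$. Combined with the trivial inclusion $(V')^{K_i} \subseteq (V')^H$ from Lemma \ref{tech} (1), this gives $W^{K_i} \subsetneq W^H$ as required.

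There is no genuine obstacle; the argument is essentially a bookkeeping exercise confirming that induction from $B$ to $G$ transports the witness of linear primitivity upward, precisely because the atoms relevant to controlling $G_{(W^H)}$ all lie inside $B$ by the very definition of the bottom interval. In particular, the choice of irreducible constituent of $\mathrm{Ind}_B^G V$ is immaterial — any one works.
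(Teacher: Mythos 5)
Your argument is correct: inducing the witness $V$ from $B$ to $G$, picking an irreducible constituent $W$ with $V\le W|_B$ via Frobenius reciprocity, and checking $W^{K_i}\subsetneq W^H$ on the atoms $K_i\subseteq B$ (which suffices by Lemma \ref{tech} (6), since any subgroup strictly above $H$ inside $G_{(W^H)}$ would contain an atom) is exactly the standard route. The paper itself does not reprove this lemma but cites \cite[Lemma 3.37]{bp}, whose proof proceeds in essentially the same way, so your proposal matches the intended argument and has no gaps.
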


 \begin{proposition} \label{<=1}
An interval $[H,G]$ satisfying $$\sum_{i=1}^{n} \frac{1}{\vert K_i : H  \vert} \le 1$$  with $K_1, \dots , K_n$  the minimal overgroups of $H$, is linearly primitive. \end{proposition}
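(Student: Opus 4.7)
The plan is to argue by contradiction, supposing that no irreducible complex representation $V$ of $G$ satisfies $G_{(V^H)} = H$. First I would invoke Lemma \ref{tech}(4) and (6): since $H \subseteq G_{(V^H)}$ always holds, the failure of $G_{(V^H)} = H$ forces some minimal overgroup $K_i$ to sit inside $G_{(V^H)}$, which is equivalent to $V^{K_i} = V^H$. Writing the irreducible complex representations of $G$ as $V_1, \dots, V_r$, the sets
\[
S_i := \{j : V_j^{K_i} = V_j^H\}
\]
would then cover $\{1, \dots, r\}$.

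The core step is a double counting using Lemma \ref{indexrep}. Setting $d_j := \dim V_j$ and $a_j := \dim V_j^H$, the lemma gives $|G:H| = \sum_j d_j a_j$ and $|G:K_i| = \sum_j d_j \dim V_j^{K_i}$. Restricting the second identity to $j \in S_i$, where $\dim V_j^{K_i} = a_j$, yields $\sum_{j \in S_i} d_j a_j \le |G:K_i|$. Summing over $i$ and using the covering, one gets
\[
|G:H| \;=\; \sum_j d_j a_j \;\le\; \sum_i \sum_{j \in S_i} d_j a_j \;\le\; \sum_i |G:K_i|,
\]
which rearranges to $\sum_i \frac{1}{|K_i:H|} \ge 1$. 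If the hypothesis held strictly this would already give the contradiction.

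The main obstacle will be the boundary case $\sum_i \frac{1}{|K_i:H|} = 1$. My plan for handling it is to exploit the trivial representation $V_0$: since $a_0 = 1 = \dim V_0^{K_i}$ for every $i$, the index $0$ lies in every $S_i$ and is overcounted $n$ times in $\bigcup_i S_i$. When $n \ge 2$, this promotes the middle inequality above to $\sum_i \sum_{j \in S_i} d_j a_j \ge |G:H| + (n-1)$, yielding $\sum_i \frac{1}{|K_i:H|} \ge 1 + \frac{n-1}{|G:H|} > 1$, a contradiction.

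The leftover case $n = 1$ is even easier to dispatch: the contradiction hypothesis forces $V^{K_1} = V^H$ for every irreducible $V$, so $|G:K_1| = |G:H|$ by Lemma \ref{indexrep}, and hence $K_1 = H$, contrary to $K_1$ being a proper minimal overgroup. Together these cases close the argument, so an irreducible $V$ with $G_{(V^H)} = H$ must exist.
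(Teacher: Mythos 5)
Your proof is correct and follows essentially the same route as the paper's: both hinge on Lemma \ref{indexrep} together with the observation that the trivial representation is counted $n$ times, which forces $\sum_i |G:K_i| \ge |G:H| + (n-1)$ whenever every irreducible fails, contradicting the hypothesis $\sum_i |G:K_i| \le |G:H|$ for $n\ge 2$. The only cosmetic differences are that you argue by contrapositive with the covering sets $S_i$ where the paper splits cases on whether the subspace sum $\sum_i V_\alpha^{K_i}$ equals $V_\alpha^H$, and that you dispatch $n=1$ directly from the index formula where the paper instead invokes Lemmas \ref{B1} and \ref{bottomprim}.
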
    
\begin{proof} First, by Lemmas \ref{B1}, \ref{bottomprim}, we can assume $n>1$. By assumption $\sum_{i=1}^{n} \frac{|G:H|}{\vert K_i : H  \vert} \le |G:H|$, so $\sum_{i=1}^{n} |G:K_i| \le |G:H|$. Let $V_1, \dots , V_r$ be  the irreducible complex representations of $G$. By Lemma \ref{indexrep}  $$\sum_{i=1}^{n} |G:K_i| = \sum_{i=1}^{n} \sum_{\alpha = 1}^r \dim(V_{\alpha})\dim(V_{\alpha}^{K_i}) = \sum_{\alpha = 1}^r \dim(V_{\alpha})[\sum_{i=1}^{n} \dim(V_{\alpha}^{K_i})].$$    

If $\forall \alpha$, $\sum_i V_{\alpha}^{K_i} = V_{\alpha}^H$, then $$\sum_{i=1}^{n} \dim(V_{\alpha}^{K_i}) \ge \dim(V_{\alpha}^{H}),$$ and so $\sum_{i=1}^{n} |G:K_i| \ge |G:H|$, but $\sum_{i=1}^{n} |G:K_i| \le |G:H|$, then $\sum_{i=1}^{n} |G:K_i| = |G:H|$. So $\forall \alpha$, $$\sum_{i=1}^{n} \dim(V_{\alpha}^{K_i}) = \dim(V_{\alpha}^{H}),$$ but for $V_1$ trivial, we get that $n=\sum_{i=1}^{n} \dim(V_{1}^{K_i})= \dim(V_{1}^{H}) = 1$, contradiction with $n>1$.    

Else there is $\alpha$ such that $\sum_i V_{\alpha}^{K_i} \subsetneq V_{\alpha}^H$, then by Lemma \ref{tech} (6),   $K_i \not \subseteq G_{(V_{\alpha}^H)} \ \forall i$, which means that $G_{(V_{\alpha}^H)}=H$  by minimality. \end{proof}

\begin{corollary} \label{B2}
If a subgroup $H$ of $G$ admits at most two minimal overgroups, then $[H,G]$ is linearly primitive. In particular, a boolean interval of rank $n \le 2$ is linearly primitive.
\end{corollary}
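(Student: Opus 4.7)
The corollary is an immediate consequence of Proposition \ref{<=1}, and my plan is to simply verify that its hypothesis is met.

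First, observe that for any proper inclusion $H < K_i$ of finite groups, we have $|K_i : H| \ge 2$, so $\frac{1}{|K_i:H|} \le \frac{1}{2}$. If $H$ admits at most two minimal overgroups $K_1, \dots, K_n$ with $n \le 2$, then
$$\sum_{i=1}^{n} \frac{1}{|K_i:H|} \le n \cdot \frac{1}{2} \le 1,$$
which is exactly the hypothesis of Proposition \ref{<=1}. So $[H,G]$ is linearly primitive.

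For the second assertion, I would note that a boolean interval of rank $n$ is isomorphic to $B_n$, whose atoms number exactly $n$; the atoms of $[H,G]$ (viewed as a lattice with $H$ as $\hat{0}$) are precisely the minimal overgroups of $H$ in $G$. Hence a boolean interval of rank $n \le 2$ has at most two minimal overgroups, and the first part of the corollary applies.

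There is no real obstacle here: the work has already been done in Proposition \ref{<=1}. The only thing to be careful about is the edge case $n = 0$ (the trivial interval $[G,G]$), in which the sum is empty and the conclusion holds vacuously via the trivial representation, and the case $n = 1$ (rank $1$), which is also already covered by Lemma \ref{B1} invoked inside the proof of Proposition \ref{<=1}.
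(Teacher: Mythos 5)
Your proof is correct and is essentially the paper's own argument: bound each term by $\tfrac12$ so the sum over at most two minimal overgroups is $\le 1$, then invoke Proposition \ref{<=1}. The extra remarks about the rank-$n$ boolean case and the edge cases $n=0,1$ are fine but not needed beyond what the paper already does.
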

\begin{proof}
$\sum_{i} \frac{1}{\vert K_i : H  \vert} \le \frac{1}{2} + \frac{1}{2} = 1$; the result follows by Proposition \ref{<=1}.
\end{proof}  

%

We can upgrade Proposition \ref{<=1} in the distributive case as follows:

 \begin{theorem} \label{<=2}
A distributive interval $[H,G]$ satisfying $$\sum_{i=1}^{n} \frac{1}{\vert K_i : H  \vert} \le 2$$  with $K_1, \dots , K_n$  the minimal overgroups of $H$, is linearly primitive. \end{theorem}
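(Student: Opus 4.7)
The plan is to extend the counting argument of Proposition \ref{<=1} by exploiting the contribution of the trivial representation. First I would reduce to the boolean case: by Lemma \ref{bottomprim} together with Corollary \ref{bottomBn}, it suffices to prove the statement when $[H,G]$ is boolean, say of rank $n$. When $n\le 2$ the hypothesis already gives $\sum_i 1/|K_i:H|\le 1$ and Corollary \ref{B2} applies, so I may assume $n\ge 3$. Let $V_1,\dots,V_r$ be the irreducible complex representations of $G$ with $V_1$ trivial, and set $d_\alpha^S:=\dim V_\alpha^{K_S}$; then $V_\alpha^{K_S}=\bigcap_{i\in S}V_\alpha^{K_i}$ by Lemma \ref{tech}(3) and $|G:K_S|=\sum_\alpha (\dim V_\alpha)d_\alpha^S$ by Lemma \ref{indexrep}.

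As in Proposition \ref{<=1}, if some $\alpha$ satisfies $\sum_i V_\alpha^{K_i}\subsetneq V_\alpha^H$ then each $V_\alpha^{K_i}\subsetneq V_\alpha^H$, and Lemma \ref{tech}(6) together with the minimality of the $K_i$ gives $G_{(V_\alpha^H)}=H$, so the result holds. Otherwise, $\sum_i V_\alpha^{K_i}=V_\alpha^H$ for every $\alpha$; and if additionally $I_\alpha:=\{i:V_\alpha^{K_i}=V_\alpha^H\}=\emptyset$ for some $\alpha$, then again Lemma \ref{tech}(6) exhibits $V_\alpha$ as a witness. So I may assume $I_\alpha\neq\emptyset$ for every $\alpha$. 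The refined bound $\sum_i d_\alpha^{\{i\}}\ge|I_\alpha|d_\alpha^\emptyset$ combined with the hypothesis $\sum_i|G:K_i|\le 2|G:H|$ yields
\[
\sum_\alpha(\dim V_\alpha)d_\alpha^\emptyset(|I_\alpha|-2)\le 0.
\]
Since the trivial representation contributes $n-2\ge 1$ to the left-hand side, some nontrivial $\alpha_0$ must satisfy $|I_{\alpha_0}|=1$; writing $I_{\alpha_0}=\{i_0\}$, Lemma \ref{tech}(6) together with minimality pins down $G_{(V_{\alpha_0}^H)}=K_{i_0}$, so $V_{\alpha_0}$ only witnesses linear primitivity of the sub-interval $[K_{i_0},G]$ (boolean of rank $n-1$), rather than of $[H,G]$ itself.

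The main obstacle is converting this partial witness into a full witness for $[H,G]$. I would try to close the argument in one of two ways. The first route is to combine the subspace inclusion-exclusion inequality $\dim\bigl(\sum_i V_\alpha^{K_i}\bigr)\le\sum_{\emptyset\neq S}(-1)^{|S|+1}d_\alpha^S$ (applied under $\sum_i V_\alpha^{K_i}=V_\alpha^H$, weighted by $\dim V_\alpha$ and summed over $\alpha$) to obtain an inequality on $\hat\varphi(H,G)$, and then to show, by pushing the hypothesis $\sum_i 1/|K_i:H|\le 2$ through the higher-order terms, that $\hat\varphi(H,G)>0$ — whereupon Theorem \ref{theo2} closes the case. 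The second route is to induct on $n$, applying the theorem to the boolean sub-interval $[H,\bigvee_{j\ne i_0}K_j]$ of rank $n-1$ (the hypothesis is inherited, with room to spare) and then to lift the resulting witness via Theorem \ref{theo0}. Either way, the crux is to sharpen the dimension inequalities beyond the elementary counting so that the single-witness case $|I_{\alpha_0}|=1$ is ruled out quantitatively; this is where I expect the bulk of the technical effort to lie.
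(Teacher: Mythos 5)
You have correctly carried out the reduction to the boolean case and the elementary counting, and you have correctly located where the difficulty sits: after your counting there remains a nontrivial $V_{\alpha_0}$ with $V_{\alpha_0}^{K_{i_0}}=V_{\alpha_0}^H\neq 0$, whence $G_{(V_{\alpha_0}^H)}=K_{i_0}$ by Lemma \ref{tech}(4)--(5), and no witness for $[H,G]$ is produced. The gap is that neither of your two proposed exits from this case works. Route (b) fails outright: Theorem \ref{theo0} (= Theorem \ref{exten}) requires the coatom to have index $2$ in $G$, and $|G:K_{i_0}^{\complement}|$ has no reason to equal $2$ (by Lemma \ref{2<->2} that would force $|K_{i_0}:H|=2$); there is no general mechanism in the paper for lifting a witness from $[H,L]$ to $[H,G]$ for an arbitrary coatom $L$. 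Route (a) is a research program rather than a proof: the inclusion--exclusion inequality for dimensions of sums of subspaces is not a standard fact (inclusion--exclusion already fails as an identity for three lines in a plane), and even granting some such estimate, deducing $\hat\varphi(H,G)>0$ from $\sum_i 1/|K_i:H|\le 2$ is essentially Conjecture \ref{lower} for this subclass -- something the paper does not prove and only verifies for very particular index factorizations via Proposition \ref{p^nq}, Lemma \ref{check}, Corollary \ref{allsplitsmall} and computer checks.

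The paper's actual proof closes the hard case with two devices absent from your proposal. First, the dichotomy is taken not on $\sum_i V_\alpha^{K_i}$ but on the pairwise joins: either $\sum_{i\neq j}V_\alpha^{K_i\vee K_j}=V_\alpha^H$ for all $\alpha$, in which case one gets $\sum_{s\neq k}V_\alpha^{K_s}=V_\alpha^H$ for \emph{every} $k$, hence $\sum_j\dim V_\alpha^{K_j}\ge 2\dim V_\alpha^H$ whenever your set $I_\alpha$ is nonempty; the trivial representation then yields $\sum_i|G:K_i|\ge 2|G:H|+(n-2)$, contradicting the hypothesis for $n>2$, so your case $|I_{\alpha_0}|=1$ cannot occur in this branch. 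Second, when the pairwise-join condition fails for some $\alpha$ and $G_{(V_\alpha^H)}$ is an atom $K_i$, the paper takes $V_\beta$ witnessing linear primitivity of the maximal subgroup $L_i=K_i^{\complement}$ (Lemma \ref{B1}) and extracts an irreducible constituent $V_\gamma$ of $V_\alpha\otimes V_\beta$ with $K_i\cap G_{(V_\gamma^H)}$ and $G_{(V_\gamma^H)}\cap L_i$ contained in $H$; distributivity then forces $G_{(V_\gamma^H)}=H$. That tensor-product Claim is the technical heart of the theorem -- the paper itself can only establish it via the planar-algebra argument of \cite{pa} -- and nothing in your proposal substitutes for it.
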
   
\begin{proof}  
By Lemma \ref{bottomprim}, Corollaries \ref{bottomBn} and \ref{B2}, we can assume the interval to be boolean of rank $n>2$.  

If $\exists \alpha$ such that 
\begin{equation} \label{Xalpha}
 \sum_{i,j,i \neq j}V_{\alpha}^{K_i \vee K_j} \subsetneq V_{\alpha}^H\tag{$\star$}
\end{equation}
then by Lemma \ref{tech} (6), $\forall i,j$ with $i \neq j$, $K_i \vee K_j \not \subseteq G_{(V_{\alpha}^H)}$. If $G_{(V_{\alpha}^H)} =H$ then ok, else by the boolean structure and minimality $\exists i$ such that $G_{(V_{\alpha}^H)} = K_i$. Now $L_i:=K_i^{\complement}$ (see Definition \ref{comp}) is a maximal subgroup of $G$, so by Lemma \ref{B1}, there is $\beta$ such that $G_{(V_{\beta}^{L_i})} = L_i$.

\begin{claim} \label{claim}
$\exists V_{\gamma} \le V_{\alpha} \otimes V_{\beta}$ such that $ K_i \cap G_{(V_{\gamma}^H)} ,  G_{(V_{\gamma}^H)} \cap L_i \subseteq K_i \cap L_i$. \end{claim} 
\begin{claimproof}
See the first part of \cite[Theorem 6.8]{pa} proof; it exploits \eqref{Xalpha} in a tricky way (we put this reference because we didn't find an argument which avoids the use of planar algebras).
\end{claimproof} 

\noindent By $H \subseteq G_{(V_{\gamma}^H)}$, distributivity and Claim, we conclude as follows:  
$$ G_{(V_{\gamma}^H)} =  G_{(V_{\gamma}^H)} \vee H = G_{(V_{\gamma}^H)} \vee (K_i \wedge L_i) = (G_{(V_{\gamma}^H)} \wedge K_i) \vee (G_{(V_{\gamma}^H)} \wedge L_i) $$ 
$$ \subseteq (K_i \wedge L_i) \vee (K_i \wedge L_i) = H \vee H  = H$$

Else, $\forall \alpha$, $$\sum_{i,j,i \neq j}V_{\alpha}^{K_i \vee K_j} = V_{\alpha}^H.$$ $\forall k$, $\forall (i,j)$ with $i \neq j$, $\exists s \in \{i,j\}$ with $s \neq k$, but  $V_{\alpha}^{K_i \vee K_j} \subseteq V_{\alpha}^{K_s}$, so $$ \sum_{s \neq k} V_{\alpha}^{K_s} = V^H_{\alpha}.$$
It follows that $\forall i,  \forall \alpha$, $$\sum_{j \neq i}\dim(V_{\alpha}^{K_j})  \ge \dim(V^H_{\alpha}).$$
Now if $\exists \alpha \forall i, V_{\alpha}^{K_i} \subsetneq V^H_{\alpha} $ then (by Lemma \ref{tech} (6) and minimality) $G_{(V_{\alpha}^H)} = H.$ Else $\forall \alpha \exists i, V_{\alpha}^{K_i} = V^H_{\alpha} $, but $\sum_{j \neq i}\dim(V_{\alpha}^{K_j}) \ge \dim(V^H_{\alpha})$, so $$ \sum_{j}\dim(V_{\alpha}^{K_j}) \ge 2 \dim(V^H_{\alpha}) $$ By using Lemma \ref{indexrep} and taking $V_1$ trivial, we get $$ \sum_i |G:K_i| = \sum_i [\sum_{\alpha} \dim(V_{\alpha}) \dim(V_{\alpha}^{K_i})] =   \sum_{\alpha} \dim(V_{\alpha}) [ \sum_i \dim(V_{\alpha}^{K_i})] $$  $$ \ge  n+2 \sum_{\alpha \neq 1} \dim(V_{\alpha}) \dim(V^H_{\alpha})  = 2|G:H| + (n-2).$$  
It follows that $$ \sum_{i=1}^n \frac{1}{|K_i:H|} \ge 2 + \frac{n-2}{|G:H|}$$ which contradicts the assumption because $n>2$.
\end{proof}

\begin{corollary} \label{B4}
A rank $n$ boolean interval $[H,G]$ with $|K_i:H| \ge n/2$ for any minimal overgroup $K_i$ of $H$, is linearly primitive. In particular, a boolean interval of rank $n \le 4$ is linearly primitive.
\end{corollary}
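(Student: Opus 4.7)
The statement cries out for a direct reduction to Theorem~\ref{<=2}. The plan is to show that the hypothesis $|K_i:H| \ge n/2$ forces the sum $\sum_{i=1}^n \frac{1}{|K_i:H|}$ to be at most $2$, so that Theorem~\ref{<=2} applies immediately. Since the interval $[H,G]$ is boolean of rank $n$, there are exactly $n$ atoms, i.e.\ exactly $n$ minimal overgroups $K_1,\dots,K_n$ of $H$, so the sum in question has exactly $n$ terms. Replacing each term by the upper bound $2/n$ coming from the hypothesis yields
$$\sum_{i=1}^{n} \frac{1}{|K_i:H|} \le \sum_{i=1}^{n} \frac{2}{n} = 2,$$
and Theorem~\ref{<=2} then gives linear primitivity.

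For the "in particular" clause, I would note that any minimal overgroup $K_i$ of $H$ in the lattice of subgroups satisfies $K_i \supsetneq H$, hence $|K_i : H| \ge 2$. Therefore whenever $n \le 4$ we automatically have $n/2 \le 2 \le |K_i:H|$, so the hypothesis of the first part is fulfilled and the boolean interval is linearly primitive.

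There is no serious obstacle here: the entire content has been absorbed into Theorem~\ref{<=2}, and the corollary is just a clean numerical packaging of it. The only subtlety worth flagging is why the boolean assumption is used at all --- it guarantees that the number of minimal overgroups equals the rank $n$, so that the termwise bound $2/n$ telescopes exactly to $2$ rather than something larger.
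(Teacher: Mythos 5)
Your proposal is correct and is essentially identical to the paper's own proof: bound each of the $n$ terms by $2/n$ to get $\sum_i \frac{1}{|K_i:H|} \le 2$ and apply Theorem~\ref{<=2} (boolean intervals being distributive), with the $n\le 4$ case following from $|K_i:H|\ge 2 \ge n/2$. No issues.
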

\begin{proof}
$\sum_{i} \frac{1}{\vert K_i : H  \vert} \le n \times \frac{2}{n} = 2$; the result follows by Theorem \ref{<=2}.
\end{proof}  

In the next section, we get a proof at any rank $n < 7$.

\section{The proof for small index} 
This section will prove dual Ore's theorem, for any boolean interval of rank $<7$, and then for any distributive interval of index $|G:H| < 9720$.

\begin{lemma} \label{2-2}
Let $[H,G]$ be a boolean interval of rank $2$ and let $K,L$ the atoms.
Then $(|G:K|,|G:L|)$ and $ (|K:H|,|L:H|) \neq (2,2)$. 
\end{lemma}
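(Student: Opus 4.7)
The plan is to show that the two conditions $(|G:K|,|G:L|) = (2,2)$ and $(|K:H|,|L:H|) = (2,2)$ are in fact equivalent under the boolean rank-$2$ hypothesis, and that the common consequence forces $H \triangleleft G$ with $G/H$ a group of order $4$ whose subgroup lattice contradicts the elementary classification of such groups.

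First I would note that in a boolean interval of rank $2$, the two atoms $K,L$ are simultaneously the two coatoms, so each is both a minimal overgroup of $H$ and a maximal subgroup of $G$, and moreover $K \cap L = H$, $\langle K,L\rangle = G$. Suppose $(|G:K|,|G:L|)=(2,2)$. Then $K$ and $L$ are normal in $G$, so $KL$ is a subgroup; since $L \not\subseteq K$ (incomparability of atoms), $KL \supsetneq K$, and maximality of $K$ forces $KL = G$. The formula $|KL| = |K||L|/|K\cap L|$ then gives $|G:H| = 4$ and $|K:H|=|L:H|=2$. Conversely, if $(|K:H|,|L:H|)=(2,2)$, then $H \triangleleft K$ and $H \triangleleft L$, hence $H$ is normalized by $\langle K,L\rangle = G$; a parallel product computation in $G/H$ (where $K/H$ and $L/H$ have order $2$ and intersect trivially since $(K\cap L)/H = H/H$) yields $|G/H|=4$ and $|G:K|=|G:L|=2$.

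Either way, $H$ is normal in $G$ and the quotient $G/H$ has order $4$, with subgroup lattice isomorphic to $[H,G] \cong B_2$. The punchline is that $G/H$ would then have exactly two proper nontrivial subgroups, namely $K/H$ and $L/H$, each of order $2$. But the only groups of order $4$ are $\mathbb{Z}/4$, with a unique subgroup of order $2$, and $\mathbb{Z}/2\times\mathbb{Z}/2$, with three subgroups of order $2$. Neither admits exactly two, contradiction.

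There is essentially no obstacle to this argument; the only mildly delicate point is establishing normality of $H$ in case B, which follows because $\langle K,L\rangle = G$ normalizes any subgroup normalized by both $K$ and $L$. Everything else is Lagrange plus a two-line classification of order-$4$ groups.
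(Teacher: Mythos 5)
Your case A is correct and essentially matches the paper (normality of $K$ and $L$ gives $KL=G$, the product formula gives $|G:H|=4$, and no group of order $4$ has subgroup lattice $B_2$). The gap is in case B. There you have $H\triangleleft G$ and two order-$2$ subgroups $\bar K=K/H$, $\bar L=L/H$ of $G/H$ with trivial intersection, and you claim a ``parallel product computation'' yields $|G/H|=4$. But the product formula $|\bar K\bar L|=|\bar K||\bar L|/|\bar K\cap\bar L|=4$ only computes the cardinality of the \emph{set} $\bar K\bar L$, which equals $|\bar K\vee\bar L|=|G/H|$ only when $\bar K\bar L$ is a subgroup. In case A that holds because $K,L\triangleleft G$; in case B you have no normality of $\bar K$ or $\bar L$ in $G/H$, so the step fails. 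The standard counterexample to the principle you are invoking is $S_3$ generated by two distinct transposition subgroups: trivial intersection, product set of size $4$, but the join has order $6$. So $|G/H|=4$ is unjustified, the claimed equivalence of the two conditions is not established by your argument (it is in fact Lemma~\ref{2<->2} of the paper, whose forward direction needs a genuinely nontrivial coset argument), and your order-$4$ punchline does not apply to case B as written.

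The repair is short. Either observe that $G/H$ is generated by two involutions, hence is dihedral $D_{2n}$ with $n=\mathrm{ord}(\bar a\bar b)$: for $n\ge 3$ the rotation subgroup is a third proper nontrivial subgroup, for $n=2$ the Klein group has three subgroups of order $2$, and $n=1$ is impossible since $\bar K\ne\bar L$; each case contradicts $\mathcal L(G/H)\cong[H,G]\cong B_2$. Or do what the paper does for both cases at once: by the correspondence theorem $\mathcal L(G/H)\cong[H,G]$ is boolean, hence distributive, so Ore's theorem (Theorem~\ref{oreintro} applied to $[1,G/H]$, or rather Ore's original characterization) forces $G/H$ to be cyclic, and a cyclic group cannot contain two distinct subgroups of the same order (case B) or of the same index (case A). This route needs no information about $|G/H|$ at all.
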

\begin{proof}
If $|G:K| = |G:L| = 2$, then $K$ and $L$ are normal subgroups of $G$, and so $H=K \wedge L$ is also normal. So $G/H$ is a group and $[1,G/H] = [H,G]$ as lattices, but a boolean lattice is distributive, so by Ore's theorem, $G/H$ is cyclic; but it has two  subgroups of index $2$, contradiction.   
If $|K:H| = |L:H| = 2$, then $H$ is a normal subgroup of $K$ and $L$, so of $G=H \vee K$, contradiction as above. 
\end{proof}
Note the following immediate generalization:
\begin{remark} \label{n-n} Let $[H,G]$ be boolean of rank $2$, with $K$ and $L$ the atoms. \begin{itemize}
\item If $H$ is a normal subgroup of $K$ and $L$, then $|K:H| \neq |L:H|$.
\item If $K$ and $L$ are normal subgroups of $G$ then $|G:K| \neq |G:L|$.
\end{itemize}
\end{remark}

\begin{remark} \label{profor}
Let $G$ be a finite group and $H,K$ two subgroups, then  $|H| \cdot |K| = |HK| \cdot |H \cap K|$ (Product Formula). It follows that $$|H| \cdot |K| \le |H \vee K| \cdot |H \wedge K| $$
\end{remark}

\begin{corollary} \label{decre}
Let $[H,G]$ be a boolean interval of finite groups and $A$ an atom. Any $K_1, K_2 \in [H,A^{\complement}]$ with $ K_1 \subset K_2$ satisfy $$|K_1 \vee A : K_1| \le |K_2 \vee A : K_2|$$
Moreover if $|G:A^{\complement}| = 2$ then  $|K \vee A : K| = 2$, $\forall K \in [H,A^{\complement}]$.
\end{corollary}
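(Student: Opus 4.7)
The plan is to exhibit an injection between coset spaces to obtain the index inequality. Given $K_1 \subseteq K_2$ both lying in $[H, A^{\complement}]$, define
$$\phi : (K_1 \vee A)/K_1 \longrightarrow (K_2 \vee A)/K_2, \qquad gK_1 \mapsto gK_2,$$
where both sides denote sets of left cosets (not necessarily quotient groups). Well-definedness is automatic: $K_1 \subseteq K_2$ and $K_1 \vee A \subseteq K_2 \vee A$. If I can show $\phi$ is injective, then $|K_1 \vee A : K_1| \le |K_2 \vee A : K_2|$ follows at once.

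The injectivity step is the only place where the hypotheses really enter. If $g, g' \in K_1 \vee A$ satisfy $gK_2 = g'K_2$, then $g^{-1}g' \in K_2 \cap (K_1 \vee A) = K_2 \wedge (K_1 \vee A)$. I would then compute this meet using the distributivity of $[H, G]$:
$$K_2 \wedge (K_1 \vee A) = (K_2 \wedge K_1) \vee (K_2 \wedge A) = K_1 \vee H = K_1,$$
where $K_2 \wedge K_1 = K_1$ because $K_1 \subseteq K_2$, and $K_2 \wedge A = H$ because $K_2 \subseteq A^{\complement}$ forces $K_2 \wedge A \subseteq A^{\complement} \wedge A = H$ (while $H \subseteq K_2 \wedge A$ holds trivially). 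Thus $g^{-1}g' \in K_1$ and $\phi$ is injective.

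For the ``moreover'' clause, I apply the just-proved monotonicity with $K_2 := A^{\complement}$: for every $K \in [H, A^{\complement}]$,
$$|K \vee A : K| \le |A^{\complement} \vee A : A^{\complement}| = |G : A^{\complement}| = 2.$$
Conversely, $A$ is an atom, so $A \not\subseteq A^{\complement}$ and hence $A \not\subseteq K$; therefore $K \vee A \supsetneq K$, giving $|K \vee A : K| \ge 2$, and equality holds.

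The only substantive step is the identity $K_2 \cap (K_1 \vee A) = K_1$, and I expect this to be the main (in fact only) obstacle: without distributivity, the Product Formula of Remark \ref{profor} yields only the coarse lower bound $|K \vee A : K| \ge |A : H|$, which is not a monotonicity statement. Distributivity is exactly what upgrades that inequality to the pointwise comparison asserted.
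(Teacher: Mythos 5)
Your proof is correct and follows essentially the same route as the paper: the crux in both is the distributivity computation $(K_1\vee A)\wedge K_2=(K_1\wedge K_2)\vee(A\wedge K_2)=K_1\vee H=K_1$, combined with a coset count; your explicit injection $gK_1\mapsto gK_2$ is just an unpacked form of the product-formula inequality $|X|\cdot|Y|\le|X\vee Y|\cdot|X\wedge Y|$ that the paper cites from Remark \ref{profor}. The ``moreover'' clause is also handled identically (monotonicity against $K_2=A^{\complement}$ plus $A\not\subseteq K$ for the lower bound $2$).
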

\begin{proof}
Suppose that $K_1 \subset K_2$. By Remark \ref{profor}, $$|K_1 \vee A | \cdot |K_2| \le |(K_1 \vee A ) \vee K_2| \cdot |(K_1 \vee A ) \wedge K_2 | $$ but $K_1 \cap K_2 = K_1$, $K_1 \cup K_2 = K_2$ and $A \wedge K_2 = H$, so by distributivity $$ |K_1 \vee A | \cdot |K_2| \le |K_2 \vee A| \cdot |K_1| $$
Finally, $A^{\complement} \vee A = G$ and $\forall K \in [H,A^{\complement}]$, $K \subset A^{\complement}$, so if $|G:A^{\complement}| = 2$, then  $$2 \le  |K \vee A : K| \le |A^{\complement} \vee A : A^{\complement}| = 2,$$ It follows that $|K \vee A : K| = 2$.
\end{proof}

\begin{lemma} \label{2<->2}
Let $[H,G]$ rank $2$ boolean with $K, L$ the atoms. Then $$ |K:H| = 2 \Leftrightarrow |G:L| = 2.$$
\end{lemma}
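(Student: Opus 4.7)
The plan is to prove the two implications separately. The direction $|G:L|=2 \Rightarrow |K:H|=2$ is immediate from Corollary \ref{decre} applied with the atom $A := K$, whose complement is $A^{\complement}=L$: the hypothesis $|G:A^{\complement}|=2$ yields $|K'\vee A:K'|=2$ for every $K'\in[H,L]$, and specializing at $K'=H$ gives $|K:H|=|H\vee K : H|=2$.

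The reverse implication is the substantive part. Given $|K:H|=2$, the subgroup $H$ is normal in $K$, so $xHx^{-1}=H$ for any $x\in K\setminus H$. My plan is to study the conjugate $L' := xLx^{-1}$. Being a subgroup of $G$ that contains $xHx^{-1}=H$, it lies in the interval $[H,G]$; it has cardinality $|L|$, so $L'\neq H$ and $L'\neq G$; and $L'=K$ would force $L = x^{-1}Kx = K$ (using $x\in K$), which is ruled out by $K\neq L$. Since $[H,G]$ is boolean of rank $2$ and hence has only the four elements $H,K,L,G$, the only remaining possibility is $L'=L$, i.e.\ $x\in N_G(L)$.

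From there I would combine $x\in N_G(L)$ with the trivial inclusion $H\subseteq L\subseteq N_G(L)$ to obtain $K=\langle H,x\rangle\subseteq N_G(L)$, hence $G=\langle K,L\rangle\subseteq N_G(L)$, so $L\triangleleft G$. Then $KL$ is a subgroup containing both $K$ and $L$, so $KL=G$, and the second isomorphism theorem closes the argument: $|G:L|=|KL:L|=|K:K\cap L|=|K:H|=2$.

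The step I expect to be the main obstacle is identifying that a conjugation argument is the right tool at all: the product formula (Remark \ref{profor}) alone only delivers the inequality $|K:H|\le|G:L|$, and forcing equality when $|K:H|=2$ really seems to require the structural rigidity of having exactly four intermediate subgroups, exploited by feeding the conjugate $xLx^{-1}$ back into the finite list $\{H,K,L,G\}$.
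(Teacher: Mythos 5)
Your proposal is correct and follows essentially the same route as the paper: both directions match, and the key step in the substantive direction is the same conjugation trick, feeding $xLx^{-1}$ (the paper's $\tau L\tau^{-1}$) back into the four-element lattice $\{H,K,L,G\}$ to get $x\in N_G(L)$. The only cosmetic difference is the endgame: the paper uses $\tau^2\in H\subset L$ to exhibit $\langle L,\tau\rangle=L\sqcup L\tau$ directly, whereas you deduce $L\triangleleft G$ and invoke $KL=G$ with the second isomorphism theorem; both are valid.
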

\begin{proof}  If $|G:L| = 2$ then $|K:H| = 2$ by Corollary \ref{decre}. \\   If  $|K:H| = 2$ then $H \triangleleft K$ and $K = H \sqcup H \tau$ with $\tau H = H \tau$ and $(H \tau)^2 = H$, so $H \tau^2 = H$ and $\tau^2 \in H$. Now $L \in (H,G)$ open, then $\tau L \tau^{-1} \in (\tau H \tau^{-1},\tau G \tau^{-1}) = (H,G)$, so by assumption $\tau L \tau^{-1} \in \{K,L\}$.  If $\tau L \tau^{-1} = K$, then $L = \tau^{-1} K \tau = K$, contradiction. So $\tau L \tau^{-1} = L$. Now $H=H \tau^2 \subset L \tau^2$, and $\tau^2 \in H \subset L$, so $L \tau^2 = L$. It follows that $\langle L, \tau \rangle = L \sqcup L \tau$. But by assumption, $G = \langle L, \tau \rangle$, so $|G:L| = 2$. \end{proof}

\begin{corollary} \label{top2} If a boolean interval $[H,G]$ admits a subinterval $[K,L]$ of index $2$, then there is an atom $A$ with $L = K \vee A$ and $|G:A^{\complement}| = 2$. 
\end{corollary}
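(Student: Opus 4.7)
The plan is to first identify the atom $A$ forced by the boolean structure, and then propagate the index-$2$ condition across $[H,G]$ via repeated applications of Lemma \ref{2<->2}.

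Since $[H,G]$ is boolean (say of rank $n$), the condition $|L:K|=2$ means $[K,L]$ has rank $1$, so $L$ covers $K$. In any finite boolean lattice, such a cover determines a unique atom $A$ of the ambient lattice with $L=K\vee A$ and $A\not\le K$; the latter condition is equivalent to $K\le A^{\complement}$. This fixes $A$, and the real content of the corollary is that $|G:A^{\complement}|=2$.

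I would first establish the base case $|A:H|=2$. By Corollary \ref{decre}, the index $|X\vee A:X|$ is non-decreasing in $X\in[H,A^{\complement}]$, so
\[
|A:H|=|H\vee A:H|\le|K\vee A:K|=|L:K|=2,
\]
and since $A>H$ is an atom, $|A:H|\ge 2$, forcing equality. Next, list the remaining atoms of $[H,G]$ as $B_1,\dots,B_{n-1}$ and prove by induction on $k$ that $|A\vee\bigvee_{i\le k}B_i : \bigvee_{i\le k}B_i|=2$. The inductive step applies Lemma \ref{2<->2} to the rank-$2$ boolean subinterval $[\bigvee_{i\le k}B_i,\; A\vee\bigvee_{i\le k+1}B_i]$, whose two atoms are precisely $A\vee\bigvee_{i\le k}B_i$ and $\bigvee_{i\le k+1}B_i$; the equivalence transports the index-$2$ condition from the first atom to the second. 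Taking $k=n-1$ gives $\bigvee_i B_i=A^{\complement}$ and $A\vee A^{\complement}=G$, so $|G:A^{\complement}|=2$.

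The main obstacle is conceptual rather than technical: one might hope to finish with a single invocation of Lemma \ref{2<->2} on the four-element sublattice $\{K,L,A^{\complement},G\}$, but this sublattice is typically not a rank-$2$ interval of groups (the enclosing interval $[K,G]$ usually has rank larger than $2$), so the group-theoretic hypothesis of Lemma \ref{2<->2} is unavailable there. The ladder of genuine rank-$2$ subintervals, each sharing an atom with the next, is what actually lets the index-$2$ information travel all the way from the bottom atom $A$ up to the coatom $A^{\complement}$.
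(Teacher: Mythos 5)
Your proof is correct and uses essentially the same mechanism as the paper: a ladder of rank-$2$ boolean subintervals sharing an atom, with Lemma \ref{2<->2} transporting the index-$2$ condition step by step up to $[A^{\complement},G]$. The only (harmless) difference is that you first descend from $K$ to $H$ via Corollary \ref{decre} and then climb from $H$, whereas the paper starts the ladder directly at $K$ along a maximal chain $K=K_1<\cdots<K_r=A^{\complement}$.
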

\begin{proof}
Let $[K,L]$ be the edge of index $|L:K| = 2$. By the boolean structure, there is an atom $A \in [H,G]$ such that $L = K \vee A$. Let $$K=K_1 < K_2 < \dots < K_r = A^{\complement} $$ be a maximal chain from $K$ to $A^{\complement}$. Let $L_i = K_i \vee A$, then the interval $[K_i,L_{i+1}]$ is boolean of rank $2$, now $|L_1:K_1| = 2$, so by Lemma \ref{2<->2} $$ 2=|L_1:K_1| = |L_2:K_2| = \cdots = |L_r:K_r|=|G:A^{\complement}|.$$ 
\end{proof}

\begin{remark}
Let $[H,G]$ of index $|G:H| = 2$. Then $G = H \rtimes \mathbb{Z}/2$ if $|H|$ is odd, but it's not true in general if $|H|$ even\footnote{http://math.stackexchange.com/a/1609599/84284}. 
\end{remark}

The following theorem was pointed out by Derek Holt\footnote{http://math.stackexchange.com/a/1966655/84284}.

\begin{theorem} \label{clif}
Let $G$ be a finite group, $N$ a normal subgroup of prime index $p$ and $\pi$ an irreducible  complex represenation of $N$. Exactly one of the following occurs:
\begin{itemize}
\item[(1)] $\pi$ extends to an irreducible representation of $G$,
\item[(2)] $\mathrm{Ind}_{N}^{G}(\pi)$ is irreducible.
\end{itemize}
\end{theorem}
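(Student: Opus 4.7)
The plan is to apply Clifford theory combined with Mackey's formula for the inner product $\langle \mathrm{Ind}_N^G(\pi),\mathrm{Ind}_N^G(\pi)\rangle_G$, and to exploit that $G/N$ is cyclic of prime order to build an extension when the inertia subgroup equals $G$.

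First I would introduce the inertia subgroup $I_G(\pi):=\{g\in G \mid \pi^g\cong \pi\}$, where $\pi^g$ denotes the conjugate representation $n\mapsto \pi(g^{-1}ng)$. Since $N$ is normal, $N\subseteq I_G(\pi)\subseteq G$, and because $[G:N]=p$ is prime, either $I_G(\pi)=N$ or $I_G(\pi)=G$. Next, by Mackey's restriction formula (which collapses since $N$ is normal, so all double cosets are single cosets), $\mathrm{Res}_N^G\,\mathrm{Ind}_N^G(\pi)\cong \bigoplus_{gN\in G/N}\pi^g$. By Frobenius reciprocity this gives
$$\langle\mathrm{Ind}_N^G(\pi),\mathrm{Ind}_N^G(\pi)\rangle_G=\langle\pi,\mathrm{Res}_N^G\,\mathrm{Ind}_N^G(\pi)\rangle_N=[I_G(\pi):N].$$

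If $I_G(\pi)=N$, this inner product equals $1$, so $\mathrm{Ind}_N^G(\pi)$ is irreducible, which is case (2). If $I_G(\pi)=G$, then $\pi$ is $G$-invariant, and I would construct an extension as follows. Fix $g\in G$ with $gN$ generating $G/N$. By $G$-invariance and Schur's lemma, there is an invertible operator $A$ on $V_\pi$, unique up to scalar, satisfying $A\pi(n)A^{-1}=\pi(gng^{-1})$ for all $n\in N$. Then both $A^p$ and $\pi(g^p)$ conjugate $\pi(n)$ to $\pi(g^p n g^{-p})$, so by Schur $A^p=\lambda\,\pi(g^p)$ for some scalar $\lambda\in\mathbb{C}^\times$. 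Rescaling $A$ by a $p$-th root of $\lambda$ (possible since we work over $\mathbb{C}$), we may assume $A^p=\pi(g^p)$. Setting $\tilde\pi(g^k n):=A^k\pi(n)$ for $0\le k<p$ and $n\in N$ yields a well-defined representation of $G$ extending $\pi$; irreducibility of $\tilde\pi$ is inherited from $\pi$, giving case (1).

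Finally I would verify mutual exclusivity. If $\pi$ extends to some irreducible $\tilde\pi$ of $G$, then for any $g\in G$, $\pi^g=(\tilde\pi|_N)^g=\tilde\pi|_N=\pi$ (inner automorphisms of $G$ do not change the isomorphism class of $\tilde\pi$), so $I_G(\pi)=G$ and $\mathrm{Ind}_N^G(\pi)$ is reducible (indeed, $\mathrm{Ind}_N^G(\pi)\cong \tilde\pi\otimes\mathrm{Ind}_N^G(\mathbf{1})$ decomposes as $\bigoplus_{\chi\in \widehat{G/N}}\tilde\pi\otimes\chi$). Conversely, if $\mathrm{Ind}_N^G(\pi)$ is irreducible then the inner product computation forces $I_G(\pi)=N\neq G$, and the previous implication rules out any extension.

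The main obstacle is the extension step when $I_G(\pi)=G$: one must produce the correct intertwiner and rescale it so that $A^p=\pi(g^p)$ holds on the nose, which is precisely where primality of $p$ (equivalently, cyclicity of $G/N$) is used to trivialize the relevant $H^2(G/N,\mathbb{C}^\times)$ obstruction; everything else is bookkeeping with Mackey and Frobenius reciprocity.
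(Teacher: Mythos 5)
Your proof is correct and follows exactly the standard Clifford-theoretic route; the paper itself gives no argument here but simply cites Isaacs, Corollary 6.19, so you have supplied the details the paper omits. The inertia-group dichotomy $I_G(\pi)\in\{N,G\}$, the Mackey/Frobenius computation $\langle\mathrm{Ind}_N^G\pi,\mathrm{Ind}_N^G\pi\rangle_G=[I_G(\pi):N]$, the rescaling of the intertwiner $A$ so that $A^p=\pi(g^p)$ (where cyclicity of $G/N$ trivializes the cohomological obstruction), and the mutual-exclusivity check are all sound.
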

\begin{proof}
It is a corollary of Clifford theory, see \cite{isa} Corollary 6.19.
\end{proof}

\begin{theorem} \label{exten}
Let $[H,G]$ be a boolean interval and $L$ a coatom with $|G:L| = 2$. If $[H,L]$ is linearly primitive, then so is $[H,G]$.
\end{theorem}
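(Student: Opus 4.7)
The plan is to use Clifford theory (Theorem \ref{clif}) with respect to the normal subgroup $L \triangleleft G$ (normal because $|G:L|=2$). By hypothesis, fix an irreducible complex representation $\pi$ of $L$ with $L_{(\pi^H)}=H$; note $\pi^H \neq 0$, for otherwise $L_{(\pi^H)} = L \neq H$. Either $\pi$ extends to $G$ or $\mathrm{Ind}_L^G \pi$ is irreducible, and I would split the proof accordingly. In both cases I would exploit the atom $A := L^{\complement}$ of $[H,G]$ complementary to $L$: by Corollary \ref{decre} we have $|A:H|=2$, so there is an element $a \in A \setminus H$ with $a^2 \in H$ and $aHa^{-1}=H$; since $A \wedge L = H$ we also have $a \notin L$, so $a$ is a convenient coset representative for $G/L$.

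In the extension case, let $V$ be an extension of $\pi$ and $V' := V \otimes \mathrm{sgn}$ the other extension (via the nontrivial character of $G/L$). Since $V|_L = V'|_L = \pi$, one has $V^H = V'^H = \pi^H$ and $G_{(V^H)} \cap L = G_{(V'^H)} \cap L = H$; thus each stabilizer is either $H$ or an index-2 overgroup $H \sqcup Hx$ with $x \in G \setminus L$. I would show that at least one equals $H$ by contradiction: if both are larger, there exist $x, x' \in G \setminus L$ with $V(x)|_{\pi^H} = I$ and $V(x')|_{\pi^H} = -I$, so $\ell := x'x^{-1} \in L$ acts as $-I$ on $\pi^H$. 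A direct check that $\pi(\ell h \ell^{-1})$ fixes $\pi^H$ pointwise then yields $\ell \in N_L(H)$, so $B := \langle H, \ell \rangle$ is a subgroup of $L$ with $|B:H| = 2$, hence an atom of $[H,G]$ distinct from $A$. Applying Lemma \ref{2-2} to the rank-2 boolean subinterval $[H, A \vee B]$, whose atoms $A$ and $B$ both have index 2 over $H$, yields the desired contradiction.

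In the induction case, take $V := \mathrm{Ind}_L^G \pi$, and use the decomposition $V|_L = \pi \oplus \pi^a$. The relation $a^{-1}Ha = H$ gives $(\pi^a)^H = \pi^H$, so $V^H$ identifies with $\pi^H \oplus \pi^H$; and since $a^2 \in H$ acts trivially on $\pi^H$, the action of $a$ on $V^H$ becomes the swap $(v_1, v_2) \mapsto (v_2, v_1)$, which is nontrivial because $\pi^H \neq 0$. A similar computation for any $y = a\ell \in G \setminus L$ shows $y$ cannot fix $V^H$ pointwise, so $G_{(V^H)} \subseteq L$. Finally, $\ell \in L \cap G_{(V^H)}$ forces $\pi(\ell)|_{\pi^H}=I$ and $\pi(a^{-1}\ell a)|_{\pi^H}=I$, i.e.\ $\ell \in H$ and $\ell \in aHa^{-1}=H$, so $G_{(V^H)} = H$.

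The main obstacle will be the extension case: the delicate step is recognizing that a joint failure of $V$ and $V'$ manifests as a second atom $B$ of $[H,G]$ inside $L$ of index 2 over $H$, so that the purely combinatorial Lemma \ref{2-2} becomes applicable. The induction case is essentially a bookkeeping computation, facilitated by the coset representative $a$ with $a^2 \in H$ and $aHa^{-1}=H$ supplied by the boolean structure.
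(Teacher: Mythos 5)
Your proof is correct, and its skeleton is the same as the paper's: reduce via Theorem \ref{clif} to the extension/induction dichotomy for an irreducible $\pi$ of $L$ witnessing linear primitivity of $[H,L]$, use the element $a\in A=L^{\complement}$ with $a\notin L$, $a^2\in H$, $aHa^{-1}=H$, and handle the induced case by the explicit computation of $\mathrm{Ind}_L^G\pi$ on $V^H\cong \pi^H\oplus\pi^H$ (your induction case is essentially identical to the paper's). Where you differ is the organization of the extension case: the paper writes $G_{(V_+^H)}=H\sqcup S\tau$, proves by a coset-counting argument that $S^2=H$ and $|S|=|H|$, forces $S=H$ via Lemma \ref{2-2}, and only then brings in the sign-twisted extension $V_-$ to reach a contradiction; you instead assume both extensions $V$ and $V\otimes\mathrm{sgn}$ fail, combine the two offending elements $x,x'$ into $\ell=x'x^{-1}\in L$ acting as $-\mathrm{id}$ on $\pi^H$, check $\ell^2\in H$ and $\ell\in N_L(H)$, and so produce a second index-$2$ atom $B=H\sqcup H\ell\subseteq L$, contradicting Lemma \ref{2-2} in the rank-$2$ interval $[H,A\vee B]$. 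Both routes hinge on the same two ingredients (the two extensions differing by $\mathrm{sgn}$, and the impossibility of two index-$2$ atoms from Lemma \ref{2-2}), but your version bypasses the paper's $S$-analysis and is arguably cleaner; the paper's version has the mild advantage of pinning down $G_{(V_+^H)}$ exactly (it equals $A$ when $V_+$ fails), which is what its $V_-$ step then exploits.
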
  
\begin{proof}
Let the atom $A := L^{\complement}$. As an immediate corollary of the proofs of Lemma \ref{2<->2} and Corollary \ref{top2}, there is $\tau \in A$ such that $\forall K \in [H,L]$, $K \tau = \tau K$ and $\tau^2 \in H \subset K$, so $K \vee A = K \sqcup K \tau$ and $G = L \sqcup L \tau$. By assumption, $[H,L]$ is linearly primitive, which means the existence of an irreducible complex representation $V$ of $L$ such that $L_{(V^H)} = H$.  

 Assume that $\pi_V$ extends to an irreducible representation $\pi_{V_+}$ of $G$. Note that $G_{(V_+^H)} = H \sqcup S \tau$ with $$S=\{l \in L \mid \pi_{V_+}(l \tau) \cdot v = v, \  \forall v \in V^H  \}$$ If $S=\emptyset$ then $G_{(V_+^H)} = H$, ok. Else $S \neq \emptyset$ and note that  $$\pi_{V_+}(l \tau) \cdot v = v \Leftrightarrow \pi_{V_+}(\tau) \cdot v = \pi_{V}(l^{-1}) \cdot v $$ but $\pi_{V_+}(\tau)(V^H) \subset V^H$ and $\tau^2 \in H$, so  $\forall l_1, l_2 \in S$ and $\forall v \in V^H$, $$\pi_{V}(l_1 l_2)^{-1} \cdot v = \pi_{V_+}(\tau^2) \cdot v = v$$ It follows that $S^2 \subset H$. Now, $HS = S$, so $HS^2 = (HS)S = S^2$, which means that $S^2$ is a disjoint union of $H$-coset, then $|H| $ divides $ |S^2|$, but $S^2 \subset H$ and $S \neq \emptyset$, so $S^2 = H$. Let $s_0 \in S$, then the maps $S \ni s \mapsto s_0s \in H$ and $H \ni h \mapsto hs_0 \in S$ are  injective, so $|S| = |H|$. If $S \neq H$, then $A = H \sqcup H \tau$ and $G_{(V_+^H)} = H \sqcup S\tau$ are two different groups containing $H$ with index $2$, contradiction with the boolean structure by Lemma \ref{2-2}. So we can assume that $H=S$. Now the extension $V_+$ is completely characterized by 
$\pi_{V_+}(\tau)$, and we can make an other irreducible extension $V_-$ characterized by $\pi_{V_-}(\tau) = -\pi_{V_+}(\tau)$. As above, $G_{(V_-^H)} = H \sqcup S' \tau$ with $$S'=\{l \in L \mid \pi_{V_-}(l \tau) \cdot v = v, \  \forall v \in V^H  \}.$$
 But $\pi_{V_-}(l \tau)= -\pi_{V_+}(l \tau)$, so $$S'=\{l \in L \mid \pi_{V_+}(l \tau) \cdot v = -v, \  \forall v \in V^H  \}.$$ Then $S \cap S' = \emptyset$, but $S=H$, so $S' \neq H$, contradiction as above.   

Next, we can assume that $\pi_V$ does not extend to an irreducible representation of $G$. So $\pi_W := \mathrm{Ind}_{L}^{G}(\pi_V)$ is irreducible by Theorem \ref{clif}. We need to check that $G_{(W^H)} = H$. We can see $W$ as $V \oplus \tau V$, with $$\pi_W(l) \cdot (v_1 + \tau v_2) = \pi_V(l) \cdot v_1 + \tau [\pi_V(\tau^{-1}l\tau) \cdot v_2], $$ with $l \in L$, and $$\pi_W(\tau) \cdot (v_1 + \tau v_2) = \pi_V(\tau^2) \cdot v_2 + \tau v_2$$ Then $$W^H = \{v_1 + \tau v_2 \in W \mid \pi_V(h) \cdot v_1 = v_1 \text{ and }  \pi_V(\tau^{-1} h \tau) \cdot v_2 = v_2, \forall h \in H \}$$ But $\tau^{-1} H \tau = H$, so $W^H = V^H \oplus \tau V^H$. Finally, according to $\pi_W(l)$ and $\pi_W(\tau)$ above, we see that $G_{(W^H)} \subset L$, and then $G_{(W^H)} = H$.
\end{proof}

\begin{remark} It seems that we can extend Theorem \ref{exten}, replacing $|G:L| = 2$ by $L \triangleleft G$ (and so $|G:L| = p$ prime), using Theorem \ref{clif} and Remark \ref{n-n}. In the proof, we should have  $K \vee A = K \sqcup K \tau \sqcup \cdots \sqcup K \tau^{p-1}$, $\tau^p \in H$, $S^p= H$ and $\pi_{V_-}(\tau) = e^{2\pi i/p}\pi_{V_+}(\tau)$. We didn't check the details because we don't need this extension. 
\end{remark}

\begin{corollary} \label{no2}
Let $[H,G]$ be a boolean interval with an atom $A$ satisfying $|A:H| = 2$.  If $[H,A^{\complement}]$ is linearly primitive, then so is $[H,G]$.\end{corollary}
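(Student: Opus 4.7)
The plan is to reduce the corollary directly to Theorem \ref{exten} by upgrading the atom-level assumption $|A:H|=2$ to the coatom-level assumption $|G:A^{\complement}|=2$, after which the conclusion follows immediately from Theorem \ref{exten} applied with the coatom $L:=A^{\complement}$.

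The only real content is therefore to show that $|G:A^{\complement}|=2$, and this is already packaged for us in Corollary \ref{top2}. I would apply that corollary to the subinterval $[K,L]:=[H,A]$ of $[H,G]$, whose index is $|A:H|=2$ by hypothesis. The corollary yields an atom $B$ of $[H,G]$ with $A = H\vee B$ and $|G:B^{\complement}|=2$. But $A$ is itself an atom of $[H,G]$, so the equality $A=H\vee B$ forces $B=A$, giving $|G:A^{\complement}|=2$ as required.

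Now $L:=A^{\complement}$ is a coatom of $[H,G]$ with $|G:L|=2$, and the interval $[H,L]=[H,A^{\complement}]$ is linearly primitive by assumption, so Theorem \ref{exten} applies and yields linear primitivity of $[H,G]$. Since the argument is just the composition of two prior results, there is no real obstacle; the only point to verify carefully is that the atom delivered by Corollary \ref{top2} coincides with $A$, which is immediate from the atomicity of $A$ together with $A=H\vee B$.
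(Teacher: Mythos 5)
Your proof is correct and is exactly the paper's intended argument: the paper's proof of this corollary is literally ``Immediate by Corollary \ref{top2} and Theorem \ref{exten},'' and you have filled in the details as intended (applying Corollary \ref{top2} to the edge $[H,A]$ to get $|G:A^{\complement}|=2$, then invoking Theorem \ref{exten}). The only micro-remark is that identifying the atom $B$ with $A$ is even more immediate than you state, since $H\vee B=B$ already gives $A=B$ without appealing to atomicity.
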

\begin{proof} Immediate by Corollary \ref{top2} and Theorem \ref{exten}.
\end{proof}

One of the main result of the paper is the following:

\begin{theorem} \label{B6}
A boolean interval $[H,G]$ of rank $n<7$, is linearly primitive.
\end{theorem}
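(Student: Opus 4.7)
The plan is to induct on the rank $n$, taking $n \le 4$ as base case (already handled by Corollary \ref{B4}). At each inductive step $n \in \{5,6\}$, I would split into two cases based on whether there exists an atom $A$ of $[H,G]$ with $|A:H|=2$.

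In the case where \emph{every} atom $A_i$ satisfies $|A_i:H|\ge 3$, the bound
\[
\sum_{i=1}^{n}\frac{1}{|A_i:H|}\ \le\ \frac{n}{3}\ \le\ \frac{6}{3}\ =\ 2
\]
holds, so Theorem \ref{<=2} immediately yields linear primitivity. This is exactly why the theorem stops at rank $6$: at rank $7$ with all index-$3$ atoms, the sum $7/3$ exceeds $2$ and this direct argument breaks.

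In the remaining case, there is an atom $A$ of $[H,G]$ with $|A:H|=2$. Here I invoke Corollary \ref{no2}, which reduces the question to linear primitivity of $[H,A^{\complement}]$, a boolean interval of rank $n-1\le 5$. The inductive hypothesis finishes the job: for $n=5$ we land at rank $4$ (base case from Corollary \ref{B4}), and for $n=6$ we land at rank $5$ (already settled in the preceding step of the induction).

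There is no real obstacle at this stage, since all the heavy lifting has already been done: Theorem \ref{<=2} handles the "spread-out" case, and Theorem \ref{exten} (used via Corollary \ref{no2}) handles the "index-$2$ atom" case. The only thing to check is that the two cases really exhaust the possibilities and that the arithmetic $n/3\le 2$ is tight exactly at $n=6$, both of which are immediate. One might worry whether the two cases overlap or leave a gap, but they plainly cover all booleans of rank $\le 6$: either some atom has index $2$ over $H$, or none does.
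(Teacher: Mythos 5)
Your proposal is correct and follows essentially the same route as the paper: the paper's proof also uses Corollary \ref{no2} to discard atoms of index $2$ (implicitly performing the induction you spell out) and then applies the bound $\sum_i 1/|K_i:H| \le 6 \cdot \tfrac{1}{3} = 2$ together with Theorem \ref{<=2}. Making the induction explicit, with Corollary \ref{B4} as base case, is just a more detailed write-up of the same argument.
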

\begin{proof}
Let $K_1, \dots, K_n$ be the atoms of $[H,G]$. By Corollary \ref{no2}, we can assume that $|K_i:H| \neq 2$, $\forall i$. Now $n \le 6$ and $|K_i:H| \ge 3$, then
$$\sum_{i=1}^n \frac{1}{\vert K_i : H  \vert} \le 6 \times \frac{1}{3} = 2.$$ The result follows by Theorem \ref{<=2}.
\end{proof}

For the upper bound on the index of distributive interval we will need a former result (proved group theoretically in \cite{bp}):

\begin{theorem}{\cite[Theorem 3.24]{bp}} \label{theo2bis}
A boolean interval $[H,G]$ with a  (below) nonzero dual Euler totient is linearly primitive. $$\hat{\varphi}(H,G):=\sum_{K \in [H,G]} (-1)^{\ell(H,K)} |G:K|$$ with  $\ell(H,K)$ the rank of $[H,K]$.
\end{theorem}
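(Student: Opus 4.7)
The plan is to refine the totient $\hat{\varphi}(H,G)$ into contributions from each irreducible representation of $G$ by substituting Lemma \ref{indexrep}, and then show that a single nonvanishing contribution forces linear primitivity.

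First I would apply Lemma \ref{indexrep} inside the definition of $\hat{\varphi}$, yielding
$$\hat{\varphi}(H,G)=\sum_{\alpha}\dim(V_\alpha)\,\hat{\varphi}_\alpha(H,G),\quad\text{where}\quad \hat{\varphi}_\alpha(H,G):=\sum_{K\in[H,G]}(-1)^{\ell(H,K)}\dim(V_\alpha^{K}).$$
The nonvanishing hypothesis then produces an irreducible $V_\alpha$ with $\hat{\varphi}_\alpha(H,G)\neq 0$, and since by Lemma \ref{tech}(4) we always have $H\subseteq G_{(V_\alpha^H)}$, it suffices to prove the implication
$$\hat{\varphi}_\alpha(H,G)\neq 0 \;\Longrightarrow\; G_{(V_\alpha^H)}=H.$$

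Next I would prove the contrapositive by a parity-pairing argument on the boolean interval. Let $K_1,\dots,K_n$ be the atoms of $[H,G]$; every element is uniquely of the form $K_S:=\bigvee_{i\in S}K_i$ with $S\subseteq\{1,\dots,n\}$ and $\ell(H,K_S)=|S|$, and iterating Lemma \ref{tech}(3) gives $V_\alpha^{K_S}=\bigcap_{i\in S}V_\alpha^{K_i}$. The atoms of $[H,G]$ coincide with the minimal overgroups of $H$ in $\mathcal{L}(G)$, so if $H\subsetneq G_{(V_\alpha^H)}$ then some atom $K_j$ lies in $G_{(V_\alpha^H)}$; by the contrapositive of Lemma \ref{tech}(6), this forces $V_\alpha^{K_j}=V_\alpha^H$. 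For every $S\not\ni j$ one then has $V_\alpha^{K_{S\cup\{j\}}}=V_\alpha^{K_S}\cap V_\alpha^{K_j}=V_\alpha^{K_S}$ (using Lemma \ref{tech}(1) to get $V_\alpha^{K_S}\subseteq V_\alpha^H$), so pairing $S\leftrightarrow S\cup\{j\}$ in the defining sum collapses all terms and yields $\hat{\varphi}_\alpha(H,G)=0$.

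I expect no serious obstacle: the only subtle point is the identification of atoms of $[H,G]$ with minimal overgroups of $H$ in $\mathcal{L}(G)$, which is needed so that the inclusion $H\subsetneq G_{(V_\alpha^H)}$ actually produces one of the $K_j$ rather than an unrelated intermediate subgroup; but this is immediate, since any subgroup strictly between $H$ and an atom $K_i$ would itself lie in $[H,G]$ and contradict the atomicity of $K_i$. Everything else is standard M\"obius-style cancellation inside a boolean lattice, made possible by Lemma \ref{tech}(3) which turns the fixed-space of a join into an intersection.
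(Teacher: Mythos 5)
Your argument is correct and complete: substituting Lemma \ref{indexrep} to write $\hat{\varphi}(H,G)=\sum_\alpha \dim(V_\alpha)\hat{\varphi}_\alpha(H,G)$, and then killing $\hat{\varphi}_\alpha$ by the pairing $S\leftrightarrow S\cup\{j\}$ whenever some atom $K_j$ lies in $G_{(V_\alpha^H)}$ (via Lemma \ref{tech}(1),(3),(6)), is exactly the group-theoretic route of the cited reference \cite{bp}, which this paper does not reproduce. The one point worth stating explicitly, which you do address, is that $G_{(V_\alpha^H)}\supsetneq H$ really does contain an atom of $[H,G]$ because the atoms are precisely the minimal overgroups of $H$.
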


\begin{conjecture} \label{lower} A rank $n$ boolean interval has $\hat{\varphi} \ge 2^{n-1} $.
\end{conjecture}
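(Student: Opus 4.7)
The plan is a proof by induction on the rank $n$. The base cases $n\le 2$ reduce to direct computation, using Lemma \ref{2-2} at $n=2$ to exclude $(|K:H|,|L:H|)=(2,2)$. For the inductive step, fix an atom $A$ of $[H,G]$ with complementary coatom $L=A^{\complement}$. Partitioning $[H,G]=[H,L]\sqcup[A,G]$ into two rank $n-1$ boolean subintervals related by the bijection $K\mapsto K\vee A$, and using $|G:K|=|G:L|\cdot|L:K|$ for $K\in[H,L]$, one obtains the central recursion
$$\hat\varphi(H,G) \;=\; |G:L|\cdot\hat\varphi(H,L) \;-\; \hat\varphi(A,G).$$

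The first thing I would dispose of is the case where some atom $A_1$ has $|A_1:H|=2$, which is then unique by Lemma \ref{2-2}. Lemma \ref{2<->2} forces $|G:L_1|=2$, and Corollary \ref{decre} then forces $|K\vee A_1:K|=2$ for every $K\in[H,L_1]$, so along the lattice isomorphism $K\mapsto K\vee A_1$ from $[H,L_1]$ to $[A_1,G]$ we have $|G:K\vee A_1|=|L_1:K|$; this gives $\hat\varphi(A_1,G)=\hat\varphi(H,L_1)$, and the recursion collapses to $\hat\varphi(H,G)=\hat\varphi(H,L_1)$. Thus the problem for $[H,G]$ reduces to the same problem for the rank $n-1$ interval $[H,L_1]$, in which every atom has index $\ge 3$. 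Combining with the complementary case (where every atom of $[H,G]$ already has index $\ge 3$), the conjecture reduces to the sharpened statement $(\star)$: \emph{a rank $r$ boolean interval of finite groups with every atom of index $\ge 3$ has $\hat\varphi \ge 2^{r}$}.

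My plan for $(\star)$ is to aim for the even stronger inequality $\hat\varphi(H,G)\ge\prod_i(|A_i:H|-1)$, which implies $(\star)$ immediately since each factor is $\ge 2$. Writing $|G:K_S|=\alpha_S\prod_{j\notin S}a_j$ with $a_j:=|A_j:H|$, iteration of the product formula together with Corollary \ref{decre} yields $1\le\alpha_S\le m:=|G:H|/\prod_j a_j$, with boundary values $\alpha_{\emptyset}=\alpha_{\{i\}}=m$ and $\alpha_{[n]}=1$, and log-supermodularity $\alpha_S\alpha_T\ge\alpha_{S\cup T}\alpha_{S\cap T}$ (inherited from the analogous property of $|G:K_S|$, since the correction factors in the $a_j$ telescope via $[j\notin S]+[j\notin T]=[j\notin S\cap T]+[j\notin S\cup T]$). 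Expanding gives $\hat\varphi(H,G)=\prod_i(a_i-1)+R$ with $R=\sum_S(-1)^{|S|}(\alpha_S-1)\prod_{j\notin S}a_j$, so the task reduces to showing $R\ge 0$. The hard part will be establishing $R\ge 0$ from the log-supermodular and boundary constraints on $\alpha$; I would attempt induction on $n$ via the recursion (picking the atom $A$ so that $|G:L|$ is maximal among the coatoms, to maximize the positive term $|G:L|\cdot\hat\varphi(H,L)$ against the subtracted $\hat\varphi(A,G)$), supplemented by a direct Lagrange-style minimization of $R$ subject to the constraints in low-rank cases, where the minimum is attained at the ``multiplicative'' configuration $m=1$, $\alpha\equiv 1$ and equals zero, giving equality $\hat\varphi=\prod_i(a_i-1)$.
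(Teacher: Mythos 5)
The statement you are proving is labelled a \emph{conjecture} in the paper: the paper offers no proof of it, only special cases (Lemma \ref{p^n}, Proposition \ref{p^nq}, Remark \ref{3-4}, Lemma \ref{check}, Corollaries \ref{allsplit} and \ref{allsplitsmall}), and your proposal does not close that gap either. The parts you do carry out are correct and essentially reproduce the paper's toolkit: the recursion $\hat\varphi(H,G)=|G:L|\,\hat\varphi(H,L)-\hat\varphi(A,G)$ is the one used in Proposition \ref{p^nq}; the elimination of an index-$2$ atom via Lemma \ref{2-2}, Corollary \ref{top2} (which, not Lemma \ref{2<->2} alone, is what gives $|G:A_1^{\complement}|=2$ from $|A_1:H|=2$), Corollary \ref{decre} and the identity $\hat\varphi(H,G)=\hat\varphi(H,L_1)$ is Remark \ref{split} in the case $|A:H|=2$; and the bounds $1\le\alpha_S\le m$, the boundary values, and the log-supermodularity $\alpha_S\alpha_T\ge\alpha_{S\cup T}\alpha_{S\cap T}$ all follow correctly from Corollary \ref{decre} and the product formula (Remark \ref{profor}). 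So your reduction of the conjecture to the statement $(\star)$, and further to $R\ge 0$, i.e.\ to the strengthened inequality $\hat\varphi(H,G)\ge\prod_i(|A_i:H|-1)$, is sound.

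The genuine gap is that $R\ge 0$ is never proved: at the decisive point you write ``the hard part will be\dots I would attempt\dots,'' which is precisely the open content of the conjecture, and in fact you have replaced it by a strictly stronger open statement. Moreover, there is real reason to doubt that the abstract constraint set you extract (boundary values, $1\le\alpha_S\le m$, log-supermodularity) suffices to force $R\ge 0$ by a Lagrange-style minimization: these constraints only encode the product formula, whereas every case the paper actually verifies uses much finer information — uniqueness of prime factorization along maximal chains (Lemma \ref{unic}), a prescribed type for \emph{every} maximal chain (Proposition \ref{p^nq}, Remark \ref{3-4}, Lemma \ref{check}), or an exhaustive GAP computation of all rank-$2$ boolean intervals of small index (Lemma \ref{B2Lemma}), and even then only reaches index $<9720$. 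Until you either prove $R\ge 0$ from your constraints (and you have not exhibited even a low-rank argument beyond $n\le 3$, where the inequality is easy) or inject additional group-theoretic input, the proposal is a programme rather than a proof.
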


\begin{remark}
If Conjecture \ref{lower} is correct, then its lower bound is optimal, because realized by the interval $[1 \times S_2^n , S_2 \times S_3^n]$.
\end{remark}

\begin{lemma} \label{unic} Let $[H,G]$ be a boolean interval of rank $n$ and index $\prod p_i^{r_i}$ with $p_i$ prime and $\sum_i r_i = n$. Then for any atom $A$ and any $K \in [H,A^{\complement}]$, $|K \vee A:K| = p_i$ for some $i$.
\end{lemma}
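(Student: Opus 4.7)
The plan is to reduce the claim to a counting argument on a maximal chain of $[H,G]$, exploiting the fact that in a boolean interval the rank equals the length of any maximal chain.

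First I would observe that since $A$ is an atom of $[H,G]$ and $K \in [H,A^{\complement}]$, the pair $(K, K \vee A)$ is a covering pair in the boolean lattice $[H,G]$: indeed, under the isomorphism with $B_n$, $K$ corresponds to a subset not containing $A$, and $K \vee A$ corresponds to that subset with $A$ adjoined, so $[K, K \vee A]$ has rank $1$. In particular, no subgroup in $[H,G]$ lies strictly between $K$ and $K \vee A$.

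Next I would embed $H \subseteq K < K \vee A \subseteq G$ into a maximal chain of $[H,G]$: pick any maximal chain in the boolean subinterval $[H,K]$ and concatenate with the covering $K < K \vee A$ and any maximal chain in $[K \vee A, G]$. Because $[H,G]$ is boolean of rank $n$, every such maximal chain has exactly $n$ covering steps, say
\begin{equation*}
H = H_0 < H_1 < \dots < H_n = G,
\end{equation*}
with $H_j = K$ and $H_{j+1} = K \vee A$ for some $j$.

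By Lagrange's theorem applied successively, $|G:H| = \prod_{i=0}^{n-1} |H_{i+1}:H_i|$, a product of $n$ integers each $\geq 2$. By hypothesis, $|G:H| = \prod_i p_i^{r_i}$ with $\sum_i r_i = n$, i.e.\ $|G:H|$ has exactly $n$ prime factors counted with multiplicity. Since each factor $|H_{i+1}:H_i| \geq 2$ contributes at least one prime factor, and there are $n$ of them summing in multiplicity to $n$, each $|H_{i+1}:H_i|$ must itself be prime. In particular $|K \vee A : K| = |H_{j+1}:H_j|$ is prime, and being a divisor of $\prod_i p_i^{r_i}$ it equals some $p_i$.

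There is no serious obstacle here; the only point to be careful about is the existence of a maximal chain through the prescribed covering pair, which is immediate from the boolean (hence graded) structure. The numerical step then follows from unique factorization together with the sharp arithmetic constraint $\sum_i r_i = n$.
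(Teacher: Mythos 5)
Your proof is correct and follows essentially the same route as the paper: build a maximal chain of $[H,G]$ passing through the covering pair $K < K \vee A$, write $|G:H|$ as the product of the $n$ successive indices, and invoke unique factorization together with $\sum_i r_i = n$ to conclude each index, in particular $|K \vee A : K|$, is prime. The only cosmetic difference is that the paper constructs the chain explicitly by joining atoms one at a time, whereas you appeal to gradedness of the boolean lattice; the argument is the same.
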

\begin{proof} Let $A_1, \cdots , A_r$ be the atoms of $[H,G]$ such that $K = \bigvee_{i=1}^r A_i$, let $A_{r+1} = A$ and $A_{r+2}, \dots, A_n$ all the other atoms. By considering the corresponding maximal chain we have that $$|G:H| = |A_1:H| \cdot |A_1 \vee A_2:A_1|  \cdots |K \vee A:K|  \cdots |G:A^{\complement}_{n-1}|$$ It's a product of $n$ numbers $>1$ and the result is composed by $n$ prime numbers, so by the fundamental theorem of arithmetic, any component above is prime, then $|K \vee A:K| = p_i$ for some $i$.
\end{proof}

\begin{lemma}  \label{p^n} Let $[H,G]$ be a boolean interval of rank $n$ and index $p^n$ with $p$ prime. Then $\hat{\varphi}(H,G) = (p-1)^n>0$.
\end{lemma}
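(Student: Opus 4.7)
The plan is to compute $\hat{\varphi}(H,G)$ directly by first pinning down $|G:K|$ for every $K$ in the interval, then recognizing the resulting sum as a binomial expansion.

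First I would invoke Lemma \ref{unic}: since the index is $p^n$, every covering step $K \lessdot K \vee A$ in $[H,G]$ has index a prime dividing $p^n$, hence equal to $p$. By induction on the rank of $[H,K]$ (or equivalently by looking at a maximal chain from $H$ to $K$ and multiplying the covering indices), this gives $|K:H| = p^{\ell(H,K)}$ for every $K \in [H,G]$, and therefore
\[
|G:K| = p^{n - \ell(H,K)}.
\]

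Next, since $[H,G]$ is boolean of rank $n$, it is isomorphic to $B_n$, so for each $k \in \{0,1,\dots,n\}$ it contains exactly $\binom{n}{k}$ elements $K$ with $\ell(H,K) = k$. Plugging into the definition of the dual Euler totient,
\[
\hat{\varphi}(H,G) = \sum_{K \in [H,G]} (-1)^{\ell(H,K)} |G:K| = \sum_{k=0}^{n} \binom{n}{k} (-1)^{k} p^{n-k} = (p-1)^{n},
\]
by the binomial theorem. Since $p \ge 2$, this is strictly positive, completing the proof.

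There is essentially no obstacle here: Lemma \ref{unic} does all the structural work, and the only remaining content is the combinatorial identity, which is immediate from the binomial theorem applied to the rank-generating function of $B_n$.
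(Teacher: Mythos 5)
Your proof is correct and follows essentially the same route as the paper: apply Lemma \ref{unic} to see that every covering relation has index $p$, deduce $|G:K| = p^{n-\ell(H,K)}$, and sum over the $\binom{n}{k}$ elements of each rank via the binomial theorem. You have merely written out the details that the paper's one-line computation leaves implicit (and your exponent $p^{n-k}$ is the correct one, where the paper's display loosely writes $p^k$).
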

\begin{proof} 
By Lemma \ref{unic},  $\hat{\varphi}(H,G) = \sum_k (-1)^k {n \choose k}p^k = (p-1)^n$
\end{proof}
\begin{remark} Lemma \ref{p^n} is coherent with Conjecture \ref{lower} because if $p=2$ then $n=1$ by Lemma \ref{2-2}. \end{remark}

\begin{proposition}  \label{p^nq} Let $[H,G]$ be a boolean interval of rank $n$ and index $p^{n-1} q$, with $p,q$ prime and $p \le q$. Then $$\hat{\varphi}(H,G) = (p-1)^{n}  [1+\frac{q-p}{p} (1- \frac{1}{(1-p)^m})] \ge (p-1)^n > 0.$$ with $m$ be the number of coatoms $L \in [H,G]$ with $|G:L| = q$. 
\end{proposition}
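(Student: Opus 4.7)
The plan is to reduce the sum defining $\hat{\varphi}(H,G)$ to a combinatorial computation by classifying, for each subset $S \subseteq [n]$, whether the corresponding subgroup $K_S := \bigvee_{i \in S} A_i$ (with $A_1,\ldots,A_n$ the atoms of $[H,G]$) contains the unique ``$q$-factor''. By Lemma \ref{unic} every cover in $[H,G]$ has index $p$ or $q$, and since the indices along any maximal chain multiply to $p^{n-1}q$, exactly one cover per chain is a $q$-cover. Hence $|K_S:H| = p^{|S|-b(S)}\,q^{b(S)}$ for some $b(S) \in \{0,1\}$, and $Q := \{S : b(S) = 1\}$ is an up-set in $B_n$.

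The structural heart of the proof, and the main obstacle, is the identity $Q = \{S : J \subseteq S\}$, where $J := \{j : |G:A_j^{\complement}| = q\}$ has cardinality $m$. Its essence is the observation, following from Corollary \ref{decre}, that in a direction $i \notin J$ the jump index $|K_T \vee A_i : K_T|$ is monotone in $T$ and capped by its top value $|G:A_i^{\complement}|=p$, so $q$-covers can only occur in $J$-directions. For the forward implication, if $b(S)=1$ and some $j_0 \in J \setminus S$ existed, then $K_S \subseteq A_{j_0}^{\complement}$; but the subinterval $[H,A_{j_0}^{\complement}]$ is boolean of rank $n-1$ and index $p^{n-1}$, so applying Lemma \ref{unic} internally forces every one of its covers to have index $p$, hence $b(S) = 0$, a contradiction. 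For the reverse implication, I would use reverse induction on $|S|$: the case $S = [n]$ is immediate, and when $J \subseteq S$ with $|S| < n$ there is some $i \in [n] \setminus S$ with $i \notin J$; the cover $K_S \subset K_{S \cup \{i\}}$ is then a $p$-cover by the above, so $b(S) = b(S \cup \{i\}) = 1$ by the inductive hypothesis.

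Once $\overline{Q} = \{S : J \not\subseteq S\}$ is in hand, the computation proceeds directly. Splitting $\hat{\varphi}(H,G) = \sum_S (-1)^{|S|} |G:K_S|$ via $|G:K_S| = p^{n-1}q \cdot p^{b(S)-|S|} q^{-b(S)}$ and regrouping according to $b(S)$, one obtains
\[
\hat{\varphi}(H,G) = (p-1)^n + p^{n-1}(q-p) \sum_{S \in \overline{Q}} (-1/p)^{|S|}.
\]
The remaining sum evaluates, by inclusion-exclusion over $\{S : J \not\subseteq S\}$, to $\big((p-1)/p\big)^n - (-1/p)^m \big((p-1)/p\big)^{n-m}$; using $(1-p)^m = (-1)^m(p-1)^m$ and simplifying yields the stated closed form. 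The inequality $\hat{\varphi}(H,G) \ge (p-1)^n > 0$ then follows because $q \ge p$ and $(1-p)^{-m} \le 1$ (since $|p-1| \ge 1$, by case analysis on the parity of $m$), so the correction factor $\frac{q-p}{p}\big(1-(1-p)^{-m}\big)$ is nonnegative.
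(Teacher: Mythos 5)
Your proof is correct, but it takes a genuinely different route from the paper. The paper argues by induction on the rank: it picks a coatom $L$ with $|G:L|=q$ and its complementary atom $A=L^{\complement}$, applies the recursion $\hat{\varphi}(H,G)=q\,\hat{\varphi}(H,L)-\hat{\varphi}(A,G)$, computes $\hat{\varphi}(H,L)=(p-1)^{n-1}$ from Lemma \ref{p^n} since $|L:H|=p^{n-1}$, and feeds the inductive formula (with $m-1$ $q$-coatoms) into $\hat{\varphi}(A,G)$, treating $m=0$ as a separate degenerate case. You instead evaluate the defining sum in closed form by determining exactly which $K_S$ have $q\mid |K_S:H|$: your identification of $Q$ with the principal filter above $\bigvee_{j\in J}A_j$, where $J$ indexes the $q$-coatoms, is the new content, and both directions of that identification are sound (the forward one via Lemma \ref{unic} applied inside $[H,A_{j_0}^{\complement}]$ of index $p^{n-1}$, the reverse one via the monotonicity of Corollary \ref{decre} capping non-$J$ directions at index $p$). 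The subsequent binomial computation and the positivity argument check out. Your method buys a transparent combinatorial meaning for $m$ and shows as a byproduct that $m\ge 1$ forces itself whenever $p<q$ (your reverse induction applied to $J=\emptyset$ would give $q\mid 1$), which the paper handles instead by concluding $|G:H|=p^n$ when $m=0$; the paper's recursion, on the other hand, is the engine reused later (Remark \ref{3-4}, Lemma \ref{check}), so it iterates more readily to non-prime and three-factor indices. One cosmetic point: your classification $b(S)\in\{0,1\}$ is ill-posed when $p=q$, but since the correction term carries the factor $q-p$ this case collapses to Lemma \ref{p^n} anyway; it would be worth saying so explicitly.
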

\begin{proof}  If $m = 0$, then by Lemma \ref{unic}, Corollary \ref{decre} and $p \le q$, for any atom $A \in [H,G]$ and $\forall K \in [H, A^{\complement}]$, $|K \vee A : K| = p$, so $|G:H| = p^n$ and $\hat{\varphi}(H,G) = (p-1)^n$ by Lemma \ref{p^n}, ok.   

Else $m \ge 1$. We will prove the formula by induction. If $n=1$, then $m=1$ and $\hat{\varphi}(H,G) = q-1$, ok. Next, assume it is true at rank $<n$. Let $L$ be a coatom with $|G:L| = q$, then for $A = L^{\complement}$, $$\hat{\varphi}(H,G) = q\hat{\varphi}(H,L) - \hat{\varphi}(A,G)$$
Now $|L:H| = p^{n-1}$ so by Lemma \ref{p^n},  $\hat{\varphi}(H,L) = (p-1)^{n-1}$. But $|A:H| = p$ or $q$. If $|A:H| = p$ then $|G:A| = p^{n-2}q$ and by induction
$$\hat{\varphi}(A,G) = (p-1)^{n-1}  [1+\frac{q-p}{p} (1- \frac{1}{(1-p)^{m-1}})].$$ 
Else $|A:H| = q$, $|G:A| = p^{n-1}$, $m=1$ and the same formula works. Then $$\hat{\varphi}(H,G) = (p-1)^{n-1}  [q-1-\frac{q-p}{p} (1- \frac{1}{(1-p)^{m-1}})] $$ $$=  (p-1)^{n}  [\frac{q-1}{p-1}+\frac{q-p}{p} (\frac{1}{1-p}- \frac{1}{(1-p)^{m}})] $$ 
$$=  (p-1)^{n}  [\frac{q-1}{p-1} - \frac{q-p}{p}(1+\frac{1}{p-1}) +\frac{q-p}{p} (1- \frac{1}{(1-p)^m})] $$
$$=  (p-1)^{n}  [1 +\frac{q-p}{p} (1- \frac{1}{(1-p)^m})] $$ 
The result follows.
\end{proof}

\begin{definition} A chain $H_1 \subset \cdots \subset H_{r+1}$ is of type $(k_1, \dots , k_r)$ if $\exists \sigma \in S_r$ with $k_{\sigma(i)} = |H_{i+1}:H_i|$ $($so that we can choose $(k_i)_i$ increasing$)$. 
\end{definition}

\begin{remark} \label{3-4} The proof of Proposition \ref{p^nq} is working without assuming $p,q$ prime, but assuming type $(p, \dots , p, q)$ for every maximal chain of $[H,G]$. For $p$ prime and $q=p^2$ we deduce that at rank $n$ and index $p^{n+1}$, there is $1 \le m \le n$ such that  $$\hat{\varphi}(H,G) = (p-1)^{n+1}+(p-1)^{n} - (-1)^{m}(p-1)^{n+1-m} \ge (p-1)^{n+1}$$ If there is no edge of index $2$, we can also take $q=2p$ or $(p,q) = (3,4)$. 
\end{remark}

\begin{lemma} \label{check}
A boolean interval $[H,G]$ of index $|G:H| = a^nbc$ and rank $n+2$ with $3 \le a \le b \le c \le 12$, $1 \le n \le 6$ and every maximal chain of type $(a, \dots , a, b, c)$, has a dual Euler totient $\hat{\varphi}(H,G) \ge (a-1)^{n+2}.$
\end{lemma}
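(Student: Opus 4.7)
The plan is to apply the recursion from Proposition \ref{p^nq}'s proof,
\[
\hat{\varphi}(H,G) \;=\; c \cdot \hat{\varphi}(H,L) - \hat{\varphi}(A,G),
\]
with $L$ the coatom of index $c$ and $A = L^{\complement}$. The key preliminary observation is that in any boolean interval of groups the edge index $|K\vee A':K|$ for a fixed atom $A'$ is constant equal to $|A':H|$ along $K\in[H,(A')^{\complement}]$: Corollary \ref{decre} makes it monotone, and the product formula $|A':H|\cdot|(A')^{\complement}:H| = |G:H|$ forces the two endpoints $|A':H|$ and $|G:(A')^{\complement}|$ to agree. Consequently the type of every maximal chain is exactly the multiset of atom indices, and our hypothesis pins down exactly one atom (and matching coatom) of index $c$, one of index $b$, and $n$ of index $a$.

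Both $[H,L]$ and $[A,G]$ therefore have rank $n+1$, index $a^n b$, and type $(a,\ldots,a,b)$, with a single coatom of index $b$ in each. Remark \ref{3-4} (the non-prime extension of Proposition \ref{p^nq}) then applies with $m_1 = m_2 = 1$, yielding
\[
\hat{\varphi}(H,L) \;=\; \hat{\varphi}(A,G) \;=\; (a-1)^{n+1}\!\left[1 + \tfrac{b-a}{a}\!\left(1 + \tfrac{1}{a-1}\right)\right] \;=\; (a-1)^n(b-1).
\]
Substituting into the recursion yields the exact closed form
\[
\hat{\varphi}(H,G) \;=\; (a-1)^n\,(b-1)(c-1),
\]
so the desired bound reduces to $(b-1)(c-1) \ge (a-1)^2$, which is immediate from $a \le b \le c$.

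The main conceptual obstacle is the rigidity argument at the outset: one must establish that the type of a boolean interval of groups coincides with the multiset of its atom indices, for this is what pins down the structure of both subintervals $[H,L]$ and $[A,G]$ and guarantees a single determined value $m = 1$ in the Proposition \ref{p^nq} formula on each of them. Once this is in hand, the rest of the proof is a one-line algebraic estimate; in particular, the bounded parameter ranges $3 \le a \le b \le c \le 12$, $1 \le n \le 6$ appearing in the lemma's hypotheses play no role in this approach, so the statement extends verbatim to any $a \ge 3$ and $a \le b \le c$.
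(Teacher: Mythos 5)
There is a genuine gap, and it sits exactly where you place your ``key preliminary observation.'' You claim that for a fixed atom $A'$ the edge index $|K \vee A' : K|$ is constant on $[H,(A')^{\complement}]$ because the product formula gives $|A':H|\cdot|(A')^{\complement}:H| = |G:H|$. But the product formula $|H|\cdot|K| = |HK|\cdot|H\cap K|$ only yields an \emph{inequality} at the level of the lattice, since $HK \subseteq H \vee K$ with equality only when $HK$ is a subgroup; this is precisely what Remark \ref{profor} records, and it gives $|A':H| \le |G:(A')^{\complement}|$, consistent with the monotonicity of Corollary \ref{decre} but not forcing the endpoints to agree. The paper itself exhibits counterexamples: in Lemma \ref{B2Lemma}, the rank-$2$ boolean intervals $[S_3, PSL_2(7)]$ and $[D_8, PSL_2(7)]$ have both atoms of index $3$ (resp.\ $4$) over $H$ while both coatoms have index $7$ in $G$. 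So the multiset of edge indices along a maximal chain is \emph{not} in general the multiset of atom indices, and the hypothesis on chain types does not pin down ``one atom of index $c$, one of index $b$, and $n$ of index $a$.''

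Concretely, this breaks your reduction in two places. First, for $L$ a coatom of index $c$ and $A = L^{\complement}$, you assume $|A:H| = c$; in fact Corollary \ref{decre} only gives $|A:H| \le c$, so $|A:H|$ can be $a$, $b$, or $c$, and these cases give genuinely different subintervals $[A,G]$ (of index $a^{n-1}bc$, $a^n c$, or $a^n b$ respectively). Since the recursion $\hat{\varphi}(H,G) = c\,\hat{\varphi}(H,L) - \hat{\varphi}(A,G)$ carries a minus sign, one cannot simply lower-bound both terms; one needs either exact values or an upper bound on $\hat{\varphi}(A,G)$, and the case $|A:H| = a$ forces a further iteration of the whole argument on an interval that still has three distinct edge indices. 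Second, your evaluation of $\hat{\varphi}(H,L)$ and $\hat{\varphi}(A,G)$ via the Proposition \ref{p^nq} formula with $m=1$ presupposes exactly one coatom of index $b$ in each, which again does not follow. This is why the paper's proof is an explicit case-branching computation (iterating on the case $|A:H| = a$) verified by computer over the bounded ranges $3 \le a \le b \le c \le 12$, $1 \le n \le 6$; your closing claim that these bounds ``play no role'' and that the statement extends verbatim to all $a \ge 3$ should itself have been a warning sign that the structural rigidity being invoked is too strong.
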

\begin{proof} This is checked by computer calculation using the following iterative method. Let $L$ be a coatom just that $|G:L| = c$ and $A=L^{\complement}$. Then $\hat{\varphi}(H,G) = c \hat{\varphi}(H,L)  - \hat{\varphi}(A,G)$. Now $|L:H| = a^nb$ so we can use Propoposition \ref{p^nq} formula for $\hat{\varphi}(H,L)$. Next there are three cases: $|A:H| = a,b$ or $c$. If $|A:H| = c$ then, by Corollary \ref{decre}, $\forall K \in [H,L]$, $|K \vee A:K| = c$, so $\hat{\varphi}(H,G) = (c-1)\hat{\varphi}(H,L)$. If $|A:H| = b$, then $|G:A| = a^nc$ so we can use Propoposition \ref{p^nq} formula for $\hat{\varphi}(A,G)$. Else $|A:H| = a$ and $|G:A| = a^{n-1}bc$, so we iterate the method.   \end{proof}

\begin{remark} \label{split}
Let $[H,G]$ be a boolean interval and $A$ an atom such that $\forall K \in [H,A^{\complement}]$, $|K \vee A : K| = |A:H|$. So $\hat{\varphi}(H,A^{\complement}) = \hat{\varphi}(A,G)$ and $$\hat{\varphi}(H,G) = |A:H|\hat{\varphi}(H,A^{\complement}) - \hat{\varphi}(A,G) = (|A:H|-1)\hat{\varphi}(A,G).$$
\end{remark}

\begin{corollary} \label{allsplit}
Let $[H,G]$ be a boolean interval such that for any atom $A$ and $\forall K \in [H,A^{\complement}]$, $|K \vee A : K| = |A:H|$. Then $$\hat{\varphi}(H,G) = \prod_{i=1}^n(|A_i:H|-1)>0.$$ with $A_1, \dots , A_n$ all the atoms of $[H,G]$.
\end{corollary}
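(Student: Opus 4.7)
The plan is to proceed by induction on the rank $n$ of the boolean interval, using Remark \ref{split} as the inductive step. For the base case $n=1$, the interval $[H,G]$ has a unique atom $A_1 = G$, so $\hat{\varphi}(H,G) = |G:H| - 1 = |A_1:H| - 1$, and $|A_1:H| \ge 2$ since atoms are proper overgroups.

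For the inductive step at rank $n$, pick any atom $A_1$ of $[H,G]$. By the assumption on $[H,G]$, the hypothesis of Remark \ref{split} is satisfied for $A_1$, so
\[
\hat{\varphi}(H,G) = (|A_1:H|-1)\,\hat{\varphi}(A_1,G).
\]
The interval $[A_1,G]$ is boolean of rank $n-1$ with atoms $B_i := A_1 \vee A_i$ for $i=2,\dots,n$, and the idea is to apply the induction hypothesis to $[A_1,G]$.

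The main (only nontrivial) task is to verify that the splitting hypothesis passes to $[A_1,G]$: for each atom $B_i$ of $[A_1,G]$ and every $K' \in [A_1, B_i^{\complement}]$ (complement taken in $[A_1,G]$), we must show $|K' \vee B_i : K'| = |B_i : A_1|$. Writing $K' = A_1 \vee \bigvee_{j \in S} A_j$ for some $S \subseteq \{2,\dots,n\}\setminus\{i\}$ using the boolean structure, observe that $K'$ lies in $[H, A_i^{\complement}]$ since $i \notin \{1\} \cup S$. Hence the original hypothesis applied to the atom $A_i$ and the element $K'$ gives $|K' \vee A_i : K'| = |A_i:H|$. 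Since $K' \vee B_i = K' \vee A_i$, and since the same hypothesis applied with $K = A_1 \in [H,A_i^{\complement}]$ yields $|B_i : A_1| = |A_1 \vee A_i : A_1| = |A_i:H|$, both sides equal $|A_i:H|$, so the hypothesis transfers.

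By induction, $\hat{\varphi}(A_1,G) = \prod_{i=2}^n (|B_i : A_1| - 1) = \prod_{i=2}^n (|A_i:H|-1)$, and combining with the Remark \ref{split} identity yields $\hat{\varphi}(H,G) = \prod_{i=1}^n (|A_i:H|-1)$. Positivity is automatic since each $|A_i:H| \ge 2$. The main obstacle, and the only step requiring some care, is the bookkeeping that the splitting condition is inherited by $[A_1,G]$; everything else is a direct application of Remark \ref{split} and the boolean description of complements.
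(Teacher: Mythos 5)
Your proof is correct and follows exactly the route the paper intends: the paper's entire proof is ``By Remark \ref{split} and induction,'' and you have supplied precisely that induction, including the (correct) verification that the splitting hypothesis is inherited by $[A_1,G]$, which the paper leaves implicit.
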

\begin{proof}
By Remark \ref{split} and induction.
\end{proof}

\begin{lemma} \label{B2Lemma}
Let $[H,G]$  boolean of rank $2$ and index $<32$. Let $K,L$ be the atoms, $a=|G:K|$, $b=|G:L|$, $c=|L:H|$ and $d=|K:H|$.
\begin{center} $\begin{tikzpicture}
\node (A1) at (0,0) {$G$};
\node (A2) at (-1,-1) {$K$};
\node (A3) at (0,-2) {$H$};
\node (A4) at (1,-1) {$L$};
\tikzstyle{segm}=[-,>=latex, semithick]
\draw [segm] (A1)to(A2); 
\draw [segm] (A2)to(A3); 
\draw [segm] (A4)to(A3); 
\draw [segm] (A1)to(A4); 
\node  at (-0.65,-1.65) {$d$};  
\node  at (-0.65,-0.4) {$a$}; 
\node  at (0.65,-0.4) {$b$}; 
\node  at (0.65,-1.65) {$c$}; 
\end{tikzpicture} 
$ \end{center}
If $a \neq 7$, then $(a,b)=(c,d)$. \\  If $a = 7$ and $a \neq c$ then $a=b=7$ and $c=d \in \{3,4 \}$.  
\end{lemma}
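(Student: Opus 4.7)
First, apply Lemma \ref{decre} with each of $K$ and $L$ as the atom, obtaining $d \le b$ and $c \le a$. Together with $ad = bc = |G:H|$, this gives $(a,b) = (c,d) \iff a = c \iff b = d$, so in the contrary case both inequalities are strict. Lemma \ref{2<->2} then excludes $c = 2$ and $d = 2$, hence $c, d \ge 3$ and $a, b \ge 4$. Writing $a/c = b/d = p/q$ in lowest terms with $p > q$ gives $(a,b,c,d) = (ps, pt, qs, qt)$; the constraints $qs, qt \ge 3$ and $pqst < 32$ yield a short explicit list of candidate quadruples, which I would enumerate by a brief table.

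Next, since $N_G(H) \in [H,G] = \{H, K, L, G\}$, I would eliminate three of the four possibilities. If $N_G(H) = G$ then $H \triangleleft G$, so $[H,G] \cong [1, G/H]$ is the subgroup lattice of a group; by Ore's theorem $G/H$ is cyclic, and having exactly two proper nontrivial subgroups forces $G/H \cong \mathbb{Z}/pq$ with distinct primes $p,q$, yielding $(a,b) = (c,d)$, contradicting strictness. If $N_G(H) = K$, pick $k \in K \setminus H$: the subgroup $kLk^{-1}$ lies in $[H,G]$ with the same index as $L$, so equals $K$ or $L$. The former gives $L = k^{-1}Kk = K$; the latter forces $k \in N_G(L)$, and since $[L,G] = \{L,G\}$ while $k \notin L$, we obtain $L \triangleleft G$. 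Taking the quotient by $L$, the identity $G = \langle K, L \rangle$ yields $G/L = KL/L$ of size $d$, so $b = d$, again contradicting strictness. By symmetry $N_G(H) = L$ is also impossible, so $N_G(H) = H$.

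The main obstacle is ruling out all remaining candidates under the assumption that $H$ is self-normalizing. After passing to $G/H_G$ to assume $H$ is core-free, $G$ embeds in $S_{|G:H|}$ as a transitive permutation group for which the only nontrivial block systems correspond to $K$ and $L$. For each candidate quadruple other than $(7,7,3,3)$ and $(7,7,4,4)$, I would derive a contradiction by examining the orbits of $K$ acting on $G/L$ --- whose sizes divide $|K|$, sum to $b$, and include a distinguished orbit of size $d$ --- together with the requirement that no subgroup lie strictly between $H$ and $G$ beyond $K$ and $L$. Most cases should fall out from direct divisibility and block-structure considerations, possibly supplemented by a check against the classification of transitive permutation groups of small degree; the two exceptions resist elimination precisely because they are realized by explicit actions (for instance involving $\mathrm{PSL}(2,7)$ on $21$ or $28$ points), so no purely combinatorial obstruction is available. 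The case-by-case verification is where the bulk of the technical work lies.
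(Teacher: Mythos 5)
Your preliminary reductions are correct and are in fact more conceptual than anything in the paper: Corollary~\ref{decre} does give $c\le a$ and $d\le b$, so with $ad=bc$ the failure of $(a,b)=(c,d)$ forces both inequalities to be strict; Lemma~\ref{2<->2} then rules out $c=2$ and $d=2$; and your normalizer argument correctly shows $N_G(H)=H$ (the case $N_G(H)=G$ via Ore, the cases $N_G(H)=K,L$ via conjugation of the other atom). None of this is in the paper, whose entire proof is a single GAP computation: it enumerates all $241$ boolean rank-$2$ intervals of index $<32$ up to equivalence and observes that $(a,b)=(c,d)$ holds for all of them except $[D_8,PSL_2(7)]$ and $[S_3,PSL_2(7)]$.

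However, there is a genuine gap: the actual content of the lemma is precisely the elimination of the remaining candidate quadruples (e.g.\ $(4,4,3,3)$, $(5,5,3,3)$, $(6,8,3,4)$, $(8,8,3,3)$, $(9,9,3,3)$, \dots) and the identification of $(7,7,3,3)$ and $(7,7,4,4)$ as the only survivors, and you do not carry this out. You state that ``most cases should fall out from direct divisibility and block-structure considerations, possibly supplemented by a check against the classification of transitive permutation groups of small degree,'' but no such argument is given for even one candidate, and it is far from clear that orbit counting alone suffices: for instance, ruling out a primitive-free transitive group of degree $21$ with point stabilizer of order dividing nothing obvious requires real input about which transitive groups of degree $<32$ exist. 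In effect you would still need the same exhaustive computation (or classification lookup) that the paper performs, so as written the proposal reduces the problem attractively but does not prove the lemma.
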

\begin{proof}
We can check by GAP\footnote{The GAP Group, http://www.gap-system.org, version 4.8.3, 2016.} that there are exactly $241$ boolean intervals $[H,G]$ of rank $2$ and index $|G:H|< 32$ (up to equivalence). They all satisfy $(a,b)=(c,d)$, except $[D_8,PSL_2(7)]$ and $[S_3, PSL_2(7)]$, for which $(a,b) = (7,7)$ and $(c,d) = (3,3)$ or $(4,4)$.
\end{proof}
\begin{corollary}  \label{allsplitsmall}
Let $[H,G]$ be a boolean interval having a maximal chain such that the product of the index of two different edges is $<32$, and no edge has index $7$. Then $[H,G]$ satisfies Corollary \ref{allsplit}.
\end{corollary}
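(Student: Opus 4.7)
The plan is to use Lemma~\ref{B2Lemma} to swap the order in which atoms are added in the given maximal chain, showing that each atom of $[H,G]$ carries a well-defined ``personal'' index that it contributes to any edge labeled by it, and then deduce the conclusion of Corollary~\ref{allsplit}.

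Let $H=H_0<H_1<\cdots<H_n=G$ be the given maximal chain, set $e_j:=|H_j:H_{j-1}|$, and let $A_j$ be the unique atom of $[H,G]$ with $H_j=H_{j-1}\vee A_j$. The first step will be an adjacent-swap lemma: for any maximal chain $M_0<\cdots<M_n$ of $[H,G]$ realizing a permutation $\sigma$ of the atoms, meaning $M_i=A_{\sigma(1)}\vee\cdots\vee A_{\sigma(i)}$, whose edge indices $f_i:=|M_i:M_{i-1}|$ satisfy $f_i=e_{\sigma(i)}$, and any index $j$, I would apply Lemma~\ref{B2Lemma} to the rank-$2$ boolean subinterval $[M_{j-1},M_{j+1}]$. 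Its index is $f_jf_{j+1}=e_{\sigma(j)}e_{\sigma(j+1)}<32$, since $\sigma(j)\ne\sigma(j+1)$, and its top-left edge $|M_{j+1}:M_j|=f_{j+1}\ne 7$. The Lemma then produces the other atom $M'_j=M_{j-1}\vee A_{\sigma(j+1)}$ of this subinterval, with $|M'_j:M_{j-1}|=f_{j+1}$ and $|M_{j+1}:M'_j|=f_j$. Replacing $M_j$ by $M'_j$ yields a new maximal chain realizing the permutation $\sigma\circ(j,j+1)$, in which each atom again contributes its personal index $e_k$. Since adjacent transpositions generate $S_n$, iteration produces, for every permutation $\tau$, a maximal chain $M_0<\cdots<M_n$ of $[H,G]$ with $M_i=A_{\tau(1)}\vee\cdots\vee A_{\tau(i)}$ and $|M_i:M_{i-1}|=e_{\tau(i)}$.

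To conclude, I would fix an atom $A=A_k$ and some $K\in[H,A^{\complement}]$, write $K=\bigvee_{i\in S}A_i$ with $k\notin S$ and $r:=|S|$, and pick $\tau$ with $\{\tau(1),\ldots,\tau(r)\}=S$ and $\tau(r+1)=k$. Then the associated chain passes through $K$ at position $r$ and through $K\vee A$ at position $r+1$, so the edge $[K,K\vee A]$ has index $e_k$. Applying the same argument with $K=H$ (i.e.\ $r=0$) gives $|A:H|=e_k$, whence $|K\vee A:K|=e_k=|A:H|$, which is exactly the hypothesis of Corollary~\ref{allsplit}.

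The main point to check throughout the iteration is that each rank-$2$ subinterval to which Lemma~\ref{B2Lemma} is applied still satisfies the Lemma's hypotheses. This will be immediate because adjacent swaps only permute the multiset $\{e_1,\ldots,e_n\}$, while the two assumptions of Corollary~\ref{allsplitsmall} (pairwise products of different edges $<32$, and no edge of index $7$) depend only on this multiset.
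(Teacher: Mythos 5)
Your proof is correct, and it follows the same basic strategy as the paper --- transporting edge indices through rank-$2$ boolean subintervals via Lemma~\ref{B2Lemma} --- but with a different endgame. The paper only slides the single atom $A_i$ down the given chain (getting $|A_i:H|=|K_i:K_{i-1}|$) and up along the complementary chain (getting $|G:A_i^{\complement}|=|K_i:K_{i-1}|$), and then handles an arbitrary $K\in[H,A_i^{\complement}]$ by the monotonicity of $K\mapsto|K\vee A_i:K|$ from Corollary~\ref{decre}, which sandwiches $|K\vee A_i:K|$ between the two equal extremes. You instead iterate adjacent swaps to realize \emph{every} permutation of the atoms by a maximal chain whose edge indices are the corresponding $e_{\tau(i)}$ (the invariant $f_i=e_{\sigma(i)}$ guarantees at each swap that the rank-$2$ interval has index $e_{\sigma(j)}e_{\sigma(j+1)}<32$ with the relevant edge $\neq 7$, so the exceptional $PSL_2(7)$ case of Lemma~\ref{B2Lemma} never occurs), and then you reach any given $K\in[H,A^{\complement}]$ directly on a suitable chain, reading off $|K\vee A:K|=e_k=|A:H|$. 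This dispenses with Corollary~\ref{decre} entirely, i.e.\ with the group-theoretic product-formula input, and is precisely the ``combinatorial argument'' that the paper's remark following this corollary says could replace Corollary~\ref{decre}; the price is the extra bookkeeping with permutations, whereas the paper's sandwich argument is shorter once Corollary~\ref{decre} is available.
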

\begin{proof}
Consider such a maximal chain $$H = K_0 \subset K_1 \subset \cdots \subset K_n = G$$ and $A_1, \dots, A_n$ the atoms of $[H,G]$ such that $K_{i} = K_{i-1} \vee A_i$. Now, $\forall i$ and $\forall j < i$, $[K_{j-1},K_j \vee A_i]$ is boolean of rank $2$, so by Lemma \ref{B2Lemma}, $$|K_i:K_{i-1}| = |K_{i-2} \vee A_i : K_{i-2}| = |K_{i-3} \vee A_i : K_{i-2}| = \cdots = |A_i:H|$$ Next, $\forall i$ and $\forall j \ge i$, let $L_{j-1} = K_j \wedge A_i^{\complement}$, then $[L_j,K_{j+2}]$ is boolean of rank $2$ and by Lemma \ref{B2Lemma}, $$|K_i:K_{i-1}| = |K_{i+1} : L_i| = |K_{i+2} : L_{i+1}| = \cdots = |G:A_i^{\complement}|$$ Finally, by Corollary \ref{decre}, $\forall K \in [H, A_i^{\complement}]$, $$ |A_i:H| \le |K \vee A_i : K| \le |G:A_i^{\complement}| $$ but $|A_i:H| = |K_i:K_{i-1}| =|G:A_i^{\complement}|$; the result follows. 
\end{proof}

\begin{remark} A combinatorial argument could replace the use of Corollary \ref{decre} in the proof of Corollary \ref{allsplitsmall}. 
\end{remark}

\begin{remark} \label{list}
Here is the list of all the numbers $<10125$ which are product of at least seven integers $ \ge 3$; first with exactly seven integers: 
\begin{center}
\footnotesize
\begin{tabular}{lllll}
$2187 = 3^7$ \hspace{0.1cm}       & $4860=3^5  4^1  5$ \hspace{0.1cm}  & $6480=3^4  4^2 5$  \hspace{0.1cm}  & $7776=3^5  4^1  8$ \hspace{0.1cm} & $8748= 3^6  12$  \\
$2916= 3^6  4$      & $5103=3^6  7$      & $6561=3^6  9$      & $8019=3^6  11$      & $9072=3^4  4^2  7$\\
$3645=3^6  5$       & $5184=3^4  4^3$    & $6804=3^5  4^1  7$ & $8100=3^4 4^1  5^2$ & $9216= 3^2  4^5$\\
$3888=3^5  4^2$     & $5832=3^6  8$      & $6912=3^3  4^4$    & $8505=3^5  5^1  7$  & $9477=3^6  13$\\
$4374=3^6  6$       & $6075=3^5  5^2$    & $7290=3^6  10$     & $8640=3^3  4^3  5$  & $9720 = 3^5  4^1  10$;\\
\end{tabular}
\end{center}next with exactly eight integers: \footnotesize{$6561 = 3^8,\ 8748 = 3^7 4$;} \normalsize{nothing else.}
\end{remark}

We can now prove the main theorem of the paper:

\begin{theorem} A distributive interval $[H,G]$ of index $|G:H|<9720$, is linearly primitive.
\end{theorem}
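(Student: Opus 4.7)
The plan is to reduce to a boolean case analysis over a finite list of indices, and verify for each that the dual Euler totient is positive.

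First, Lemma \ref{bottomprim} together with Corollary \ref{bottomBn} reduces the distributive case to its bottom (boolean) subinterval, so I may assume $[H,G]$ itself is boolean. Theorem \ref{B6} takes care of rank $\leq 6$, so suppose $n := \mathrm{rank}[H,G] \geq 7$. If some atom $A$ satisfies $|A:H|=2$, then Corollary \ref{no2} lets me replace $G$ by $A^\complement$ and induct on $n$; hence I may assume every atom has index $\geq 3$, and then Corollary \ref{decre} ensures every edge of every maximal chain has index $\geq 3$. Therefore $|G:H|$ is a product of $n \geq 7$ integers $\geq 3$, and Remark \ref{list} enumerates exactly the $26$ such indices strictly less than $9720$ ($24$ of rank $7$ and $2$ of rank $8$).

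The remaining task is to prove $\hat\varphi(H,G) > 0$ for each listed index, since Theorem \ref{theo2bis} will then conclude. I handle the indices by the tools already assembled: the pure prime-power indices $3^7$ and $3^8$ by Lemma \ref{p^n}; the indices $3^{n-1}q$ with $q$ prime ($3^6\cdot 5$, $3^6\cdot 7$, $3^6\cdot 11$, $3^6\cdot 13$) by Proposition \ref{p^nq}; the extensions with $q\in\{4,6,9\}$ and the rank-$8$ case $3^7\cdot 4$ by Remark \ref{3-4}; and the large block of mixed-factor indices ($3^6\cdot 8$, $3^6\cdot 10$, $3^5\cdot 4^2$, $3^5\cdot 4\cdot 5$, $3^4\cdot 4^3$, $3^5\cdot 5^2$, $3^4\cdot 4^2\cdot 5$, $3^3\cdot 4^4$, $3^4\cdot 4\cdot 5^2$, $3^3\cdot 4^3\cdot 5$, $3^2\cdot 4^5$) by Corollaries \ref{allsplitsmall} and \ref{allsplit}, which apply because there is no edge of index $7$ and every pair-product of distinct atom indices is $<32$. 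The cases containing a factor $7$ or a pair-product $\geq 32$ --- $3^6\cdot 12$, $3^5\cdot 4\cdot 7$, $3^5\cdot 4\cdot 8$, $3^5\cdot 5\cdot 7$, and $3^4\cdot 4^2\cdot 7$ --- must be dispatched through Lemma \ref{check} with an appropriate $(a,b,c)$.

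The main obstacle is this last residual group, because neither Corollary \ref{allsplitsmall} (blocked by a factor $7$ or a pair-product $\geq 32$) nor Proposition \ref{p^nq} (requiring both factors prime) applies. The four indices $3^6\cdot 12$, $3^5\cdot 4\cdot 7$, $3^5\cdot 4\cdot 8$, $3^5\cdot 5\cdot 7$ are of the form $a^n bc$ with $3\leq a\leq b\leq c\leq 12$ and so fall to Lemma \ref{check} once the type $(a,\ldots,a,b,c)$ of every maximal chain is argued from the factorization together with Corollary \ref{decre}. The only index not of that shape is $3^4\cdot 4^2\cdot 7$; for it I would choose a coatom $L$ of index $7$, observe that $[H,L]$ of index $3^4\cdot 4^2$ is handled by Corollary \ref{allsplitsmall} to give $\hat\varphi(H,L)=2^4\cdot 3^2=144$, and then expand $\hat\varphi(H,G)=7\hat\varphi(H,L)-\hat\varphi(L^\complement,G)$ with $[L^\complement,G]$ a rank-$6$ boolean interval of the same index, again dispatched in the same way. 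Once the table is complete the theorem follows.
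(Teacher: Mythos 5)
Your overall architecture coincides with the paper's: reduce to a boolean interval of rank $n\geq 7$ with no edge of index $2$, enumerate the possible indices $<9720$ via Remark \ref{list}, and verify $\hat\varphi>0$ case by case using Lemma \ref{p^n}, Proposition \ref{p^nq}, Remark \ref{3-4}, Corollary \ref{allsplitsmall} and Lemma \ref{check}, concluding by Theorem \ref{theo2bis}. The reduction steps and the ``large block'' of indices you dispatch with Corollary \ref{allsplitsmall} are handled correctly and match the paper.

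The gap is in your residual group. You assert that for $3^6\cdot 12$ and $3^5\cdot 4\cdot 8$ the type $(a,\dots,a,b,c)$ of \emph{every} maximal chain ``is argued from the factorization together with Corollary \ref{decre}.'' That is false: $8748=2^2\cdot 3^7$ factors into seven integers $\geq 3$ as $(3,\dots,3,12)$, $(3,\dots,3,6,6)$ or $(3,\dots,3,4,9)$, and $7776=2^5\cdot 3^5$ as $(3,\dots,3,4,8)$ or $(3,\dots,3,4,4,6)$, so the chain type is not determined by the index and Lemma \ref{check} (which requires \emph{all} maximal chains to share one type) cannot be invoked directly. The paper resolves exactly this by a dichotomy: for $3^5\cdot4\cdot8$, \emph{if} some chain has type $(3,\dots,3,4,4,6)$ then Corollary \ref{allsplitsmall} applies (it only needs one good chain), \emph{else} every chain has type $(3,\dots,3,4,8)$ and Lemma \ref{check} applies; for $3^6\cdot12$ it further needs an extension of Lemma \ref{B2Lemma} to the index-$36$ subinterval with $a,b,c,d\in\{3,6\}$ and a rerun of Proposition \ref{p^nq}'s proof with $q=12$. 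Your proposal is missing this case analysis. Separately, your treatment of $3^4\cdot4^2\cdot7$ does not close: writing $\hat\varphi(H,G)=7\hat\varphi(H,L)-\hat\varphi(L^\complement,G)$, you claim $[L^\complement,G]$ is ``of the same index,'' but $|G:L^\complement|=|G:H|/|L^\complement:H|$ and $|L^\complement:H|$ may be $3$ or $4$ rather than $7$; moreover positivity of the right-hand side requires an \emph{upper} bound on $\hat\varphi(L^\complement,G)$, whereas the tools you cite (Lemma \ref{check}, Corollary \ref{allsplitsmall}) give lower bounds or exact values only under hypotheses you have not verified for these subintervals. This is the one place where an exact iterative computation (as in the proof of Lemma \ref{check}) is genuinely needed, and your sketch does not supply it.
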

\begin{proof}
By Lemma \ref{bottomprim}, Corollary \ref{bottomBn} and Theorem \ref{B6}, we can assume the interval to be boolean of rank $n \ge 7$, and without edge of index $2$ by Corollary \ref{top2} and Theorem \ref{exten}. So by Theorem \ref{theo2bis}, it suffices to check that for every index (except $9720$) in the list of Remark \ref{list}, any boolean interval as above with this index has a nonzero dual Euler totient. We can assume the rank to be $7$, because at rank $8$, the indices $3^8$ and $3^7 4$ are checked by Lemma \ref{p^n} and Remark \ref{3-4}, and there is nothing else at rank $>8$.  Now, any maximal chain for such a boolean interval of index $3^5  4^1  5$ has type $(3,\dots, 3,4,5)$, so it is checked by Corollary \ref{allsplitsmall}. Idem for index $3^6 10$ with $(3,\dots, 3,10)$ or $(3,\dots, 3,5,6)$. The index $3^6 7$ is checked by Proposition \ref{p^nq}. For the index $3^6 12$, if there is a maximal chain of type $(3,\dots, 3,6,6)$, $6^2>32$ but using Lemma \ref{B2Lemma} with $a,b,c,d \in \{3,6\}$ we can deduce that $(a,b)=(c,d)$, so the proof of Corollary \ref{allsplitsmall} is working; else $12$ must appears in every maximal chain, so that the proof of Proposition \ref{p^nq} works with $q=12$. 
We can do the same for every index, except $3^5 4^1 7,\ 3^5 4^1 8,\ 3^5 5^1 7,\ 3^4  4^2  7,\ 3^5  4^1  10$. For index $3^5 4^1 8$, if there is a maximal chain of type $(3, \dots, 3,4,4,6)$, then ok by Corollary \ref{allsplitsmall}, else (because there is no edge of index $2$) every maximal chain is of type $(3, \dots, 3,4,8)$, so ok by Lemma \ref{check}. We can do the same for every remaining index except $3^5  4^1  10 = 9720$, the expected upper bound.
\end{proof}

\begin{remark}
The tools above don't check $3^5  4^1  10$ because the possible maximal chain types are $(3, \dots, 3,4,5,6)$, $(3, \dots, 3,4,10)$ and $(3, \dots, 3,5,8)$. The first is ok by Corollary \ref{allsplitsmall}, but not the two last because $4 \cdot 10 = 5 \cdot 8 = 40 >32$. So there is not necessarily a unique maximal chain type, and Lemma \ref{check} can't be applied. Nevertheless, more intensive computer investigation can probably leads beyond $9720$. \end{remark}


%
%
%

\section{Acknowledgments} 
I would like to thank Derek Holt for showing me a theorem on representation theory used in this paper. This work is supported by the Institute of Mathematical Sciences, Chennai. 


\begin{bibdiv}
\begin{biblist}

\bib{bp}{article}{
   author={Balodi, Mamta},
   author={Palcoux, Sebastien},
   title={On boolean intervals of finite groups},
   pages={25pp},
   journal={arXiv:1604.06765v5}, 
   note={submitted to Trans. Amer. Math. Soc.},
}
\bib{isa}{book}{
   author={Isaacs, I. Martin},
   title={Character theory of finite groups},
   note={Corrected reprint of the 1976 original [Academic Press, New York;
   MR0460423 (57 \#417)]},
   publisher={Dover Publications, Inc., New York},
   date={1994},
   pages={xii+303},
   isbn={0-486-68014-2},
   review={\MR{1280461}},
}
\bib{or}{article}{
   author={Ore, Oystein},
   title={Structures and group theory. II},
   journal={Duke Math. J.},
   volume={4},
   date={1938},
   number={2},
   pages={247--269},
   issn={0012-7094},
   review={\MR{1546048}},
   doi={10.1215/S0012-7094-38-00419-3},
}
\bib{pa}{article}{
   author={Palcoux, Sebastien},
   title={Ore's theorem for cyclic subfactor planar algebras and applications},
   pages={50pp},
   journal={arXiv:1505.06649v10}, 
   note={submitted to Pacific J. Math.},
}
\bib{sta}{book}{
   author={Stanley, Richard P.},
   title={Enumerative combinatorics. Volume 1},
   series={Cambridge Studies in Advanced Mathematics},
   volume={49},
   edition={2},
   publisher={Cambridge University Press, Cambridge},
   date={2012},
   pages={xiv+626},
   isbn={978-1-107-60262-5},
   review={\MR{2868112}},
}
\end{biblist}
\end{bibdiv}

\end{document}